\def\undersetbrace#1\to#2{\underbrace{#2}_{#1}}
\def\oversetbrace#1\to#2{\overbrace{#2}^{#1}}
\def\AMSunderset#1\to#2{\underset{#1}{#2}}
\def\AMSoverset#1\to#2{\overset{#1}{#2}}
\def\East#1#2{-\raisebox{0.1pt}{$\mkern-16mu\frac{\;\;#1\;}{\;\;#2\;}\mkern-16mu$}\to}
\newcommand{\nmb}[2]{\ifx!#1{\ref{nmb:#2}}%
\else\if.#1{\label{nmb:#2}}%
\else\if0#1{\label{nmb:#2}}%
\else{{#2}}%
\fi\fi\fi}
\newtheorem{proposition}[subsection]{Proposition}
\newtheorem*{proposition*}{Proposition}
\newtheorem{theorem}[subsection]{Theorem}
\newtheorem*{theorem*}{Theorem}
\newtheorem{lemma}[subsection]{Lemma}
\newtheorem*{lemma*}{Lemma}
\newtheorem*{corollary*}{Corollary}
\def\ign#1{}             
\def\o{\circ}
\def\X{\mathfrak X}
\def\al{\alpha}
\def\be{\beta}
\def\io{\iota}
\def\si{\sigma}
\def\ph{\varphi}
\def\ps{\psi}
\def\om{\omega}
\def\Ga{\Gamma}
\def\Om{\Omega}
\def\x{\times}
\def\p{\partial}
\let\on=\operatorname
\def\L{\mathcal L}
\def\Fl{\operatorname{Fl}}
\newlength{\dhatheight}
\begin{document}
\title[]
{The Schouten-Nijenhuis bracket in infinite dimensions
}
\author{Peter W. Michor}
\address{
Peter W. Michor:
Fakult\"at f\"ur Mathematik, Universit\"at Wien,
Nordbergstrasse 15, A-1090 Wien, Austria
}
\email{Peter.Michor@univie.ac.at}
\date{\today}

\keywords{Schouten-Nijenhuis bracket, Infinite dimensional smooth convenient manifolds}
\subjclass{58B20, 37K06}
\begin{abstract} 
The Schouten-Nijenhuis bracket on  smooth infinite-dimensional manifolds $M$ is developed  in two steps: For summable multivector fields whose pointwise dual are all differential form, and in an extended form for multivector fields which are sections of   
$L^{\bullet}_{\text{skew}}(T^*M,\mathbb R)$. We need to either assume that $C^{\infty}(M)$ separates points on $TM$, or consider sheaves of local sections; see \nmb!{1.1}.
\end{abstract}
\def\LaTeXonly{}

\maketitle

\section*{Introduction}

We develop the Schouten-Nijenhuis bracket for skew multivector fields on infinite dimensional 
smooth manifolds. The initial point of view in Sections \nmb!{1} and \nmb!{2} is that the space of multivector fields is predual to the 
only space of differential forms admitting exterior derivative, general pullbacks, and insertion operators; 
see \cite[33.21]{KrieglMichor97}. We use the bornological tensor product for that. 
We fist give a direct definition of the Schouten-Nijenhuis bracket for summable multivector fields (see \nmb!{1.4}) whose 
pointwise dual consists of differential forms in Section \nmb!{1}. 
Then we use the duality between summable multivector fields and differential forms to derive 
another formula for the Schouten-Nijenhuis bracket \thetag{\nmb!{2.3}.7} which in finite dimensions and using slightly different conventions is due to Tulczyjew \cite{Tulczyjew74}. In Section \nmb!{3} we turn this around and use Tulczyjew's formula to extend the Schouten-Nijenhuis bracket for multivector fields which are sections of the bundle $L^{\bullet}_{\text{skew}}(T^*M,\mathbb R)$.

The main use of the Schouten-Nijenhuis bracket is to recognise Poisson structures (see \nmb!{3.6}) and their formal deformations. The general Schouten-Nijenhuis bracket for general multivector fields as presented in Section \nmb!{3} is needed because there exist quite general Poisson structure in infinite dimensions; see \cite{OdzijewiczRatiu03}, \cite{OdzijewiczRatiu03a}, \cite{Tumpach20} and the overview in \cite{GRT24}. In \cite{BGT18} one finds even `queer' Poisson structures where the bracket is a bidifferential operator of order higher than one. One can adapt the Schouten-Nijenhuis bracket as developed in Section \nmb!{3} to catch also this situation by adapting the bundle of differential forms accordingly choosing from \cite[33.21 and 32.5]{KrieglMichor97}. This is not done here since there are no serious applications of such Poisson structures available yet. Moreover, many Poisson structures are only defined on duals of subbundles of $T^*M$. The results of this paper can maybe adapted to these situations. 

Weak symplectic structures in infinite dimensions in the convenient setting and their Poisson brackets have been treated in 
\cite[Section 48]{KrieglMichor97}. Poissons structures in infinite dimensionas are treated in 
\cite{NST14}, \cite{CabauPelletier24b}, and an overview is given by \cite{GRT24}. 
This paper finally aswers a question posed by Tudor Ratiu in 2013. 
I thank Alice Barbora Tumpach and Praful Rahangdale for discussions and hints. The work on this paper was supported by the thematic program "Infinite-dimensional Geometry: Theory and Applications", Jan.13 -- Feb.14, at the Erwin Schr\"odinger Institute in Vienna.

\section{The Schouten bracket for summable skew multivector fields}\nmb0{1}

\subsection{\nmb.{1.1}What manifolds do we use?}
We shall use smooth ($C^{\infty}$) calculus as described in \cite{KrieglMichor97}.
Let $M$ be a smooth manifold modelled on a convenient vector space $E$. Thus we have a smooth atlas 
$M\supset U_\al\East{u_\al}{} u_\al(U\al)\subset E$ and all chart changings 
$u_{\al\be}:u_\be(U_\al\cap U_\be)\to u_\al(U_\al\cap U_\be)$ are smooth.
Let $\pi_M:TM\to M$ be its (kinematic) tangent bundle which is described by induced charts
$TM\supset TU_\al \East{Tu_\al} u(U_\al)\x E\subset E\x E$.
The Lie algebra of vector fields $\X(M)$ is the space of sections of $TM\to M$.
We shall use the graded differential algebra of differential forms consisting of smooth sections of the bundle of bounded skew symmetric multilinear forms $L_{\text{skew}}^*(TM,\mathbb R)$ on the the tangent bundle, see \cite[Section 33]{KrieglMichor97}: 
$$\Om(M) = \bigoplus_{k=0}^\infty \Om^k(M) = \bigoplus_{k=0}^\infty C^{\infty}(M \leftarrow L^k_{\text{skew}}(TM,\mathbb R)).$$
This algebra admits all desired operations $\L_X, d, f^*$; see \cite[33.21]{KrieglMichor97}.

In this paper we consider only manifolds $M$ having the following property:
\emph{For each covector $\al\in T^*M$ there exists a function $f\in C^{\infty}(M)$ with $df_{\pi(\al)}=\al$}.
The following classes of manifolds have this property: 
\begin{itemize}
\item Smoothly paracompact manifolds (having smoothly paracompact modelling spaces), see \cite[Section 33]{KrieglMichor97}. 
\item Each manifold $M$ such that $C^{\infty}(M,\mathbb R)$ separates points on $TM$. 
\item Manifolds smoothly immersed into a convenient vector space, i.e., admitting a smooth mapping $F:M\to E$ such that $dF_x:T_xM\to E$ is injective for each $x\in M$; then bounded linear functionals on $E$ do the job.  
\end{itemize}
One can drop this property if one considers sheaves of local sections of all relevant bundles below. 
For simplicity's sake we stick with global sections.

\subsection{\nmb.{1.2}The completed bornological tensor product}
For a convenient vector space $E$, let 
$E\bar\otimes_\be E$ be the $c^\infty$-completed (see \cite[4.29]{KrieglMichor97}) 
bornological tensor product which linearizes bibounded bilinear 
mappings; see \cite[5.7]{KrieglMichor97}. 
If $E$ is a Banach or Fr\'echet or (DF) space then each bibounded bilinear mapping is jointly 
continuous and thus $E\bar\otimes_\be E$ agrees with the completed projective tensor product of 
Grothendieck \cite{Grothendieck55}; see \cite[5.8]{KrieglMichor97}.

Let $\bigwedge^nE$ be the (Mackey-) closed linear subspace of all {\em alternating
tensors} in $\bar\bigotimes^n_\be E$, see \cite[Section 5]{KrieglMichor97}.
It is the universal solution for convenient vector spaces $F$ of the linearization problem
$L(\bigwedge^n E,F)\cong L_{\text{alt}}^n(E;F)$,
where $L^n_{\text{alt}}(E;F)$ is the space of all bounded $n$-linear alternating mappings 
$E\x\dots\x E\to F$, a direct summand of $L^n(E;F):=L(E,\dots,E;F)$.
By \cite[5.9.5]{KrieglMichor97} the mapping 
$\bigwedge^n : L(E,F)\to L(\bigwedge^nE,\bigwedge^nF)$ is bounded multilinear and thus smooth.

\subsection{\nmb.{1.3}Summable multivector fields}

We apply the smooth mapping 
$$\bigwedge^n : L(E,F)\to L(\bigwedge^nE,\bigwedge^nF)$$
to the chart change mappings for the tangent bundle $TM\to M$ to obtain the 
smooth vector bundle  $\pi_M:\bigwedge^n TM \to M$ of {\em summable} $n$-multivectors on $M$. 
Note that the space linearly generated by $X_1\wedge \dots\wedge X_n$ for $X_i\in T_xM$ is dense in 
the fiber $\bigwedge^n T_xM$.
The space $\Ga(\bigwedge^n TM)$ of smooth sections of this bundle is the space of {\em summable 
multivector fields} on $M$. We write $\Ga(\bigwedge^0 TM)=C^\infty(M,\mathbb R)$ and  
$\Ga(\bigwedge TM) = \bigoplus_{n\ge 0} \Ga(\bigwedge^n TM)$ which is a graded  commutative algebra for the usual 
wedge-product (see \cite[33.8]{KrieglMichor97} for the convention) of multivector fields for the grading $(\Ga(\bigwedge TM),\;\wedge \;)_n = 
\Ga(\bigwedge^n TM)$. The wedge product is a bounded bilinear operation on the convenient space 
$\Ga(\bigwedge TM)$, by the universal property of the the bornological tensor product.  

\begin{theorem}\nmb.{1.4} Let $M$ be a smooth manifold modeled on a convenient vector space $M$. 
Then the following bracket (where $X_i$ and $Y_j$ are vectorfields) 
is well defined and bounded and extends to $\Ga(\bigwedge TM)$. 
\begin{align*}
[1,U] &= 0\qquad\text{  for all }U\in\Ga(\bigwedge TM)
\\
[f,U] &= -\bar\io(df)U\qquad\text{  for }f\in C^\infty(M)\text{  and }U\in\Ga(\bigwedge TM)
\\&\text{ where }\quad
\bar\io(df)(X_1\wedge \dots\wedge X_k) = \sum_i(-1)^{i-1}df(X_i)\cdot X_1\wedge\cdots \widehat{X_i}\dots\wedge X_k
\\
[X_1\wedge &\dots\wedge X_k,Y_1\wedge \dots\wedge Y_l] 
=\\&
=\sum_{i,j}(-1)^{k-i+j-1} 
X_1\wedge\cdots \widehat{X_i}\dots\wedge X_k\wedge [X_i,Y_j]\wedge Y_1\wedge \cdots\widehat{Y_j}\dots\wedge Y_l
\\&
=\sum_{i,j}(-1)^{i+j} 
[X_i,Y_j]\wedge X_1\wedge\cdots \widehat{X_i}\dots\wedge X_k\wedge Y_1\wedge \cdots\widehat{Y_j}\dots\wedge Y_l
\end{align*}
For the grading $(\Ga(\bigwedge TM),[\;,\;])_n = \Ga(\bigwedge^{n+1} TM)$ we get 
a convenient graded Lie algebra which is compatible with the wedge product. Namely, for 
$U\in\Ga(\bigwedge\nolimits^{u} TM)$ and $ V\in\Ga(\bigwedge\nolimits^{v} TM)$ we have:
\begin{align*}
[U,V] &= -(-1)^{(u-1)(v-1)}[V,U]
\\
[U,[V,W]] &= [[U,V],W] + (-1)^{(u-1)(v-1)} [V,[U,W]]
\\
[U,V\wedge W] &= [U,V]\wedge W + (-1)^{(u-1)v}V\wedge [U,W]\,.
\end{align*}
\end{theorem}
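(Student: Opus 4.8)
The plan is to read the displayed formulas as the definition of the bracket on the dense subalgebra $\mathcal A\subseteq\Ga(\bigwedge TM)$ of \emph{decomposable} multivector fields — the linear span of the wedge products $X_1\wedge\dots\wedge X_k$ of vector fields, with $C^\infty(M)$ in degree $0$ — to check that this is well defined and bounded and that the three graded identities hold on $\mathcal A$, and then to carry everything over to $\Ga(\bigwedge TM)$ using boundedness and the density of $\mathcal A$. That the two formulas for $[X_1\wedge\dots\wedge X_k,Y_1\wedge\dots\wedge Y_l]$ coincide is immediate: moving $[X_i,Y_j]$ across the $k-1$ factors $X_1,\dots,\widehat{X_i},\dots,X_k$ produces $(-1)^{k-1}$, and $(-1)^{k-i+j-1}(-1)^{k-1}=(-1)^{i+j}$. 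Two reformulations are convenient: the third formula with a single vector field in the first slot reads $[Y,X_1\wedge\dots\wedge X_l]=\L_Y(X_1\wedge\dots\wedge X_l)$ (Lie derivative of a multivector field), and a reindexing then gives $[X_1\wedge\dots\wedge X_k,V]=\sum_i(-1)^{k-i}\,X_1\wedge\dots\widehat{X_i}\dots\wedge X_k\wedge\L_{X_i}V$ for $V\in\mathcal A$; and, consistently with graded antisymmetry, $[U,f]=(-1)^{u-1}\bar\io(df)U$ for a function $f$.

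Well-definedness and boundedness are best seen in a chart $U_\al$, where $\Ga(\bigwedge^k TU_\al)=C^\infty(U_\al,\bigwedge^k E)$. Substituting $[X,Y]=dY\cdot X-dX\cdot Y$ into the displayed formula and regrouping, $[U,V]$ becomes the sum of a term that contracts $dV$ with one leg of $U$ and a term that contracts $dU$ with one leg of $V$: a composite of the comultiplications $\bigwedge^pE\to E\bar\otimes_\be\bigwedge^{p-1}E$, a factorwise application of a bounded derivative, and the wedge $\bigwedge^pE\bar\otimes_\be\bigwedge^qE\to\bigwedge^{p+q}E$, all bounded by \nmb!{1.2} and \cite[Section 33]{KrieglMichor97}. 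This manifestly depends only on the multivector fields $U,V$ — not on chosen decompositions — it reproduces the displayed formula once a decomposition is substituted, it is independent of the chart (the transformation law being the classical Schouten-bracket computation), and it exhibits $[\,,\,]$ as a bounded bidifferential operator $\Ga(\bigwedge^kTM)\x\Ga(\bigwedge^lTM)\to\Ga(\bigwedge^{k+l-1}TM)$ of bidegree $(1,1)$. (Alternatively, well-definedness follows from the identity $\L_{fX}V=f\L_XV-X\wedge\bar\io(df)V$, which, with multilinearity and alternation in the $X_i$, shows $[U,V]$ is unchanged when a function is moved between factors of $U$, and symmetrically in $V$ via antisymmetry.)

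On $\mathcal A$, graded antisymmetry and the graded Leibniz rule $[U,V\wedge W]=[U,V]\wedge W+(-1)^{(u-1)v}V\wedge[U,W]$ are verified by direct computation from the displayed formula, in the Leibniz case by splitting the sum for $[U,V\wedge W]$ — with $V\wedge W$ in its concatenated decomposition — into the parts over the factors of $V$ and over those of $W$. The graded Jacobi identity on $\mathcal A$ then follows from the Leibniz rule by the standard device: for fixed decomposable $U,V$ the Jacobiator $W\mapsto[U,[V,W]]-[[U,V],W]-(-1)^{(u-1)(v-1)}[V,[U,W]]$ is again a graded derivation of $\mathcal A$ (being, up to the derivation $[[U,V],\cdot]$, a graded commutator of the derivations $[U,\cdot]$ and $[V,\cdot]$), so it is enough to check that it vanishes on $C^\infty(M)$ and on $\X(M)$, where it reduces to the Jacobi identity for the Lie bracket of vector fields and to elementary identities for $\bar\io$ and $\L$. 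Finally, decomposable multivector fields are dense in $\Ga(\bigwedge^kTM)$ — fiberwise density is recorded in \nmb!{1.3}, and globally this uses the hypotheses on $M$ from \nmb!{1.1} (smooth partitions of unity in the smoothly paracompact case) — so the three identities, being equalities of bounded multilinear maps valid on $\mathcal A$, hold on all of $\Ga(\bigwedge TM)$; since each $\Ga(\bigwedge^nTM)$ is a convenient vector space, this is the asserted convenient graded Lie algebra, compatible with $\wedge$.

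The main obstacle is the passage from $\mathcal A$ to $\Ga(\bigwedge TM)$: one must verify that the chart formula really takes bounded values in $\Ga(\bigwedge^{k+l-1}TM)$, which is exactly where the properties of the completed bornological tensor product and of $\bigwedge^nE$ from \nmb!{1.2} are needed, and one must know that decomposable multivector fields are dense in $\Ga(\bigwedge^kTM)$, which is where the hypotheses on $M$ from \nmb!{1.1} enter (their failure being the reason one would otherwise work with sheaves of local sections). By contrast, the graded identities on $\mathcal A$, though requiring patience with signs, are routine, and the derivation argument reduces Jacobi and well-definedness to the Lie-bracket case.
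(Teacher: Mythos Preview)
Your proposal is correct and follows essentially the same route as the paper: define the bracket on decomposables, check that it is insensitive to moving a function between wedge factors (your identity $\L_{fX}V=f\L_XV-X\wedge\bar\io(df)V$ is precisely the paper's key computation), extend by boundedness to the Mackey closure, and verify the graded identities by direct calculation. The minor differences are that you make the chart formula for $[U,V]$ explicit as a bounded bidifferential operator (the paper only alludes to this, deferring the local version to \thetag{\nmb!{3.4}.4}), and you reduce the Jacobi identity via the derivation trick rather than the paper's ``elementary but tedious computation''; both are harmless improvements in presentation rather than genuine departures.
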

This is the infinite dimensional version of the Schouten-Nijenhuis bracket, which was found by 
\cite{Schouten40}; that it satisfies the graded Jacobi identity is due to \cite{Nijenhuis55}.
The approach given here is the infinite dimensional version of \cite[Theorem 1.2]{Michor87a}. 
\\
\emph{Note on conventions.} The reason for the convention (which agrees with the one used by Koszul \cite{Koszul85}) used in  Theorem \nmb!{1.4} and later is as follows: It fits into the following universal property:
For any Lie algebra $\mathfrak g$ and $\mathbb N_{\ge0}$-graded Lie algebra $A$ any degree 0 Lie algebra homomorphism $\ph$ extends uniquely to a homomorphism of graded Lie algebas  $\tilde\ph$ as in the diagram below, where one should shift the degree by -1 in the top row:
$$
\xymatrix{
\bigwedge\nolimits^1\mathfrak g\; \ar@{^(->}[r] & \bigwedge\nolimits^{\bullet}\mathfrak g\\
\mathfrak g \ar[u]^{\cong}  \ar[r]^{\ph\qquad}  & A=\bigoplus\nolimits_{k=0}^\infty A^k                  \ar@{-->}[u]_{\tilde\ph} 
}
$$
A 
different approach is by Tulczyjew \cite{Tulczyjew74}; We shall use it below to extend the bracket to far larger spaces of multivector fields.
Other conventions are as follows: 
\begin{itemize}
\item Tulczyjew  \cite{Tulczyjew74} uses $-(-1)^{(u-1)(v-1)}[U,V] = -[V,U]$. This version has a more natural relation to the Lie derivative than \thetag{\nmb!{2.3}.6}.
\item Vaisman \cite{Vaisman94} and Lichnerowicz \cite{Lichnerowicz77} use $(-1)^{u-1}[U,V] = (-1)^{uv}(-1)^{v-1}[V,U]$; this choice shifts the signs for the graded skew symmetry as indicated and for the graded Jacobi identity. 
\end{itemize}

\begin{proof}
For $f\in C^\infty(M,\mathbb R)$ it is easily checked that  
\begin{multline*}
[X_1\wedge \dots\wedge X_k,Y_1\wedge \dots \wedge f.Y_j\wedge \dots\wedge Y_l]
= f.[X_1\wedge \dots\wedge X_k,Y_1\wedge \dots\wedge Y_l] 
+\\ 
+(-1)^{k-1}\bar\io(df)(X_1\wedge \dots\wedge X_k)\wedge Y_1\wedge \dots\wedge Y_l\,.
\end{multline*}
Thus the bracket, which a priori is defined on (the dense algebraic tensor products)
$\bigwedge^k \Ga(TM) \x \bigwedge^l\Ga(TM) \to \bigwedge^{k+l-1}\Ga(TM)$ factors to
$$
\bigwedge\nolimits^k_{C^\infty(M)} \Ga(TM) \x \bigwedge\nolimits^l_{C^\infty(M)}\Ga(TM) 
\to \bigwedge\nolimits^{k+l-1}_{C^\infty(M)}\Ga(TM)
$$
which equals 
$$
\Ga(\bigwedge\nolimits^k TM) \x \Ga(\bigwedge\nolimits^l TM)\to \Ga(\bigwedge\nolimits^{k+l-1} TM)\,.
$$
So it is well defined, at least for smoothly normal manifolds. For others one has to use the same 
argument on a local chart; this is done below in \thetag{\nmb!{3.4}.4} in a more general setting.
Since the bracket is clearly bounded, it extends to the Mackey closure.
Finally we have to check the graded Jacobi identity. This is an elementary but tedious computation. 
The graded derivation property with respect to the wedge product is easily checked. 
\end{proof}

\section{Using the duality between summable skew multivector fields and differential forms}\nmb0{2}

Having established the Schouten-Nijenhuis bracket for summable skew multivector fields in Section \nmb!{1}, we now 
make full use use of the duality 
between summable multivector fields and differential forms. 

\subsection{The duality between multivector fields and differential forms}\nmb.{2.1}
Let $M$ be a smooth manifold modeled on a convenient vector space $E$.
By the universal property of the bornological tensor product described in \nmb!{1.2}, the dual 
space of $\bigwedge^n E$ is the space $L^n_{\text{skew}}(E;\mathbb R)$. 
Using and extending the conventions  of \cite[5.30]{Greub78}, see also  \cite[Section 33]{KrieglMichor97}, we start from the duality 
\begin{gather*}
\langle\;,\;\rangle: \bigwedge\nolimits^n E^* \x \bigwedge\nolimits^n E \to \mathbb R
\\
\langle \ph_1\wedge \dots\wedge \ph_n, X_1\wedge \dots\wedge X_n\rangle = \det(\langle\ph_i,X_j\rangle_{i,j}) 
\end{gather*}
which is $1/n!$ of the restriction of the duality $\langle\;,\;\rangle: \otimes^n E^* \x \otimes^n E \to \mathbb R$,
we get the complete fiberwise duality  
\begin{gather*}
\langle \;,\; \rangle:\Om^n(M)\x \Ga(\bigwedge\nolimits^n TM)\to C^\infty(M)\,,
\\
\langle \om,X_1\wedge \dots\wedge X_n\rangle = \om(X_1,\dots, X_n)
\end{gather*}
We have the following dual pairs of  operators: For $\om\in\Om^p(M)$ the linear map
$\mu(\om):\Om^k(M)\to \Om^{k+p}(M)$ given by $\mu(\om)\ps := \om \wedge \ps$ is the fiberwise dual 
operator to 
$\bar\io(\om):\Ga(\bigwedge^{k+p} TM)\to \Ga(\bigwedge^k TM)$, where
\begin{multline*}
\bar\io(\om)(X_1\wedge \dots\wedge  X_{k+p}) 
= \\
=\frac1{p!k!}\sum_{\si\in \mathfrak S_{k+p}} \on{sign(\si)} \om(X_{\si(1)},\dots,X_{\si(p)}) 
X_{\si(p+1)}\wedge \dots\wedge X_{\si(p+k)}\,.
\end{multline*}
since for $\ph\in \Om^k(M)$ we have
\begin{align*}
\langle \ph&,\bar\io(\om)(X_1\wedge \dots\wedge  X_{k+p})\rangle = 
\langle\om\wedge \ph, X_1\wedge \dots\wedge  X_{k+p}\rangle
= (\om\wedge \ph) (X_1, \dots,X_{k+p})
\\&
= \frac1{p!k!}\sum_{\si\in \mathfrak S_{k+p}} \on{sign(\si)} \om(X_{\si(1)},\dots,X_{\si(p)})\ph(X_{\si(p+1)}, \dots,X_{\si(k+p)})\,.
\end{align*}
Likewise, for $U\in\Ga(\bigwedge^p TM)$ the fiberwise linear mapping $\bar\mu(U):\Ga(\bigwedge^k 
TM)\to \Ga(\bigwedge^{k+p} TM)$ given by $\bar\mu(U)V=U\wedge V$ is the fiberwise dual of the `insertion operator'
$i(U):\Om^{k+p}(M)\to \Om^k(M)$.

\begin{lemma}\nmb.{2.2}
Let $U$ be in $\Ga(\bigwedge^u TM)$. Then we have:
\begin{enumerate}
\item 
$i(U):\Om(M)\to \Om(M)$ is a homogeneous bounded module 
homomorphism of degree $-u$.  It is a graded derivation of $\Om(M)$ if and only if $p=1$. For $f\in C^{\infty}(M)$ we have $i(f)\om=f.\om$.
\item 
$i(U\wedge V)=i(V)\o i(U)$, thus the graded commutator vanishes: 
$$
[i(U),i(V)] = i(U)i(V) - (-1)^{uv}i(V)i(U)=0.
$$
\item
$i(U)(\om\wedge \ps) = i(\bar\io(\om)U)\ps + (-1)^u\om\wedge i(U)\ps$ for $\om\in\Om^1(M)$ and $\ps\in\Om(M)$.
\end{enumerate}
\end{lemma}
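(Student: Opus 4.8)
The plan is to derive all three assertions by transposing elementary algebraic identities for wedge products of multivectors across the fiberwise duality $\langle\;,\;\rangle\colon\Om^n(M)\x\Ga(\bigwedge^n TM)\to C^\infty(M)$ of \nmb!{2.1}. The essential enabling fact is that this pairing is non-degenerate in its form argument: since the decomposable multivectors span a dense subspace of each fiber $\bigwedge^n T_xM$ (\nmb!{1.3}), a form $\ph\in\Om^n(M)$ with $\langle\ph,W\rangle=0$ for all $W\in\Ga(\bigwedge^n TM)$ must vanish. By construction $i(U)$ is the fiberwise transpose of $\bar\mu(U)=U\wedge(\cdot)$, so $\langle i(U)\ps,W\rangle=\langle\ps,U\wedge W\rangle$ whenever the degrees match, and likewise $\bar\io(\om)$ for $\om\in\Om^1(M)$ is the transpose of $\mu(\om)=\om\wedge(\cdot)$, i.e.\ $\langle\om\wedge\ph,V\rangle=\langle\ph,\bar\io(\om)V\rangle$.

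For \therosteritem1: the identity $i(f)\om=f.\om$ is immediate from the pairing, using $\bigwedge^0 TM=C^\infty(M)$ and $f\wedge W=f.W$. That $i(U)$ is a bounded homomorphism of graded $C^\infty(M)$-modules of degree $-u$ follows because $\bar\mu(U)$ is $C^\infty(M)$-linear and bounded (boundedness of the wedge product, \nmb!{1.3}) and, in the convenient setting, the fiberwise transpose of a bounded vector-bundle homomorphism acts boundedly on the spaces of smooth sections (cf.\ \cite[Section 33]{KrieglMichor97}). For $u=1$ the pairing identifies $i(X)$ with the ordinary interior product (contraction) $i(X)\om=\om(X,\dots)$, which is a graded derivation of $\Om(M)$ of degree $-1$; the converse implication is disposed of after \therosteritem3.

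For \therosteritem2 I would pair $i(U\wedge V)\ps$ with an arbitrary $W\in\Ga(\bigwedge TM)$ of the complementary degree and invoke associativity of the wedge product of multivectors:
$$
\langle i(U\wedge V)\ps,W\rangle=\langle\ps,U\wedge(V\wedge W)\rangle=\langle i(U)\ps,V\wedge W\rangle=\langle i(V)i(U)\ps,W\rangle,
$$
which yields $i(U\wedge V)=i(V)\o i(U)$ by non-degeneracy; graded commutativity $V\wedge U=(-1)^{uv}U\wedge V$ in $\Ga(\bigwedge TM)$ then gives $i(U)i(V)=i(V\wedge U)=(-1)^{uv}i(V)i(U)$, that is $[i(U),i(V)]=0$. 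For \therosteritem3 the point is that for $\om\in\Om^1(M)$ the operator $\bar\io(\om)$ on $\Ga(\bigwedge TM)$ is precisely the antiderivation $X_1\wedge\dots\wedge X_k\mapsto\sum_i(-1)^{i-1}\om(X_i)\,X_1\wedge\dots\widehat{X_i}\dots\wedge X_k$ occurring in \nmb!{1.4}, hence a graded derivation of degree $-1$. Using the two transpose relations, then the derivation identity $\bar\io(\om)(U\wedge W)=(\bar\io(\om)U)\wedge W+(-1)^u U\wedge\bar\io(\om)W$, and then the transpose relations once more on the right, one gets, for every $W$ of appropriate degree,
\begin{align*}
\langle i(U)(\om\wedge\ps),W\rangle&=\langle\ps,\bar\io(\om)(U\wedge W)\rangle=\langle\ps,(\bar\io(\om)U)\wedge W\rangle+(-1)^u\langle\ps,U\wedge\bar\io(\om)W\rangle\\
&=\langle i(\bar\io(\om)U)\ps,W\rangle+(-1)^u\langle\om\wedge i(U)\ps,W\rangle,
\end{align*}
so that \therosteritem3 follows by non-degeneracy. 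Finally, for $u=1$ this specializes — via $\bar\io(\om)X=\om(X)$ and $i(g)\ps=g.\ps$ — to the graded derivation rule, whereas for $u\ge2$ the operator $i(U)$ lowers degree by $u$, so $i(U)\om=0$ for every $\om\in\Om^1(M)$; were $i(U)$ a derivation, \therosteritem3 would force $i(\bar\io(\om)U)\ps=0$ for all $\om\in\Om^1(M)$ and all $\ps\in\Om(M)$, which fails: for a decomposable $U=X_1\wedge\dots\wedge X_u$ with linearly independent $X_i$ one extends a functional on the linear span of the $X_i$ and uses \nmb!{1.1} to produce $\om\in\Om^1(M)$ with $\om(X_1)=1$ and $\om(X_j)=0$ for $j\ge2$ at the point in question, whence $\bar\io(\om)U$ is a nonzero decomposable $(u-1)$-vector and $i(\bar\io(\om)U)\ne0$.

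The steps I expect to be less than purely formal are the boundedness assertion in \therosteritem1 — which rests on the boundedness of the bundle operations together with the behaviour of transposes of bounded bundle maps on section spaces in the convenient setting — and, conceptually, the repeated passage from operator identities on $\Om(M)$ to pointwise algebraic identities on the exterior powers of the tangent spaces; this is legitimate precisely because decomposable multivectors are dense in every fiber $\bigwedge^n T_xM$, that density playing here the role that finite-dimensionality plays in the classical argument.
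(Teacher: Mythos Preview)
Your argument is correct. For \therosteritem1 and \therosteritem2 you match the paper almost verbatim: the paper says \therosteritem1 ``follows from the definition'' and obtains \therosteritem2 by dualising associativity $\bar\mu(U\wedge V)=\bar\mu(U)\bar\mu(V)$, exactly as you do. The genuine difference is in \therosteritem3. The paper works on the form side: it takes $U=X_1\wedge\dots\wedge X_u$ decomposable, writes $i(U)=i(X_u)\cdots i(X_1)$ via \therosteritem2, and then pushes $\mu(\om)$ past the string of single-vector insertions using the classical relation $i(X_j)\mu(\om)=\mu(\om(X_j))-\mu(\om)i(X_j)$ iteratively, collecting the residues into $i(\bar\io(\om)U)$. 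You instead stay on the multivector side, invoke the degree~$-1$ antiderivation property of $\bar\io(\om)$ on $\Ga(\bigwedge TM)$, and transpose once. Your route is more uniform --- no induction and no explicit decomposition of $U$ --- while the paper's reduces everything to the familiar Cartan calculus for a single vector field. Both ultimately rely on the density of decomposables, you via non-degeneracy of the pairing, the paper via extension by continuity of an identity established on decomposables. You also spell out the ``only if'' direction of the derivation claim in \therosteritem1, which the paper leaves implicit.
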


\begin{proof}
\thetag{1} follows from the definition. 
\thetag{2} is the dual of $\bar \mu (U\wedge V)W= U\wedge V\wedge W = \bar\mu(U)\bar\mu(V)W$.
\thetag{3} For vector fields $X_j$ and decomposable $U=X_1\wedge\dots\wedge  X_u$ we have
\begin{align*}
&i(X_1\wedge\dots\wedge  X_u)\mu(\om) = i(X_u)\dots i(X_1)\mu(\om)  
\\&
= i(X_u)\dots i(X_2)\big(-\mu(\om)i(X_1) + \mu(i(X_1)\om) \big)
\\&
=  \om(X_1).i(X_u)\dots i(X_2) -  i(X_u)\dots i(X_3)\big(\mu(\om)i(X_2) + \mu(i(X_2)\om) \big)
\\&
= \sum_{j=1}^u (-1)^{j-1} \om(X_j).i(X_u)\dots\widehat{i(X_j)}\dots i(X_1) +(-1)^u \mu(\om)  i(X_u)\dots i(X_1)
\\&
= i\Big(\sum_{j=1}^u (-1)^{j-1} X_u\wedge \dots\wedge \bar\io(\om)X_j\wedge \dots\wedge  X_1\Big) +(-1)^u \mu(\om)  i(X_1\wedge \dots\wedge  X_u)
\\&\implies [i(U),\mu(\om)] = i(\bar\io(\om)U)  \qedhere
\end{align*}
\end{proof}

\subsection{The Lie differential operator}\nmb.{2.3}
For $U\in \Ga(\bigwedge^u TM)$ we define the {\em Lie derivation} $\L(U):\Om^k(M)\to \Om^{k-u+1}(M)$ by
\begin{equation*}
\L(U) := [i(U),d] = i(U)\o d - (-1)^p d\o i(U) \tag{1}
\end{equation*}

which is homogeneous of degree $1-u$ and is called the Lie differential operator. It is a 
derivation if and only if $U$ is a vector field. We have $[\L(U),d]=0$ by the graded Jacobi 
identity of the graded commutator.

\begin{theorem*}
Let $U\in\Ga(\bigwedge^u TM)$, $V\in \Ga(\bigwedge^v TM)$, and $f\in C^\infty(M)$.
Then we have:
\begin{align*}
\L(U\wedge V) &= i(V)\o \L(U) + (-1)^u\L(V)\o i(U)
\tag{2}\\
\L(X_1\wedge \dots\wedge X_u) &= \sum_j 
(-1)^{j-1}i(X_u)\cdots i(X_{j+1})\L(X_j)i(X_{j-1})\cdots i(X_1)
\tag{3}\\
\L(f) &= [i(f),d] = [\mu(f),d] = - \mu(df)
\tag{4}\\
[\L(U),i(V)] &= (-1)^{(u-1)(v-1)} i([U,V]) = -i([V,U])
\tag{5}\\
[\L(U),\L(V)] &= (-1)^{(u-1)(v-1)} \L([U,V]) = -\L([V,U])
\tag{6}\\
\langle d\om,-[V,U] \rangle &= \langle di(V)d\om,U \rangle -(-1)^{(u-1)(v-1)} \langle di(U)d\om,V\rangle
\tag{7}\end{align*}
\end{theorem*}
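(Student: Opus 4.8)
The plan is to establish \thetag{2}, \thetag{3}, \thetag{4} directly, then the ``magic formula'' \thetag{5}, and finally to deduce \thetag{6} and \thetag{7} from \thetag{5}. For \thetag{2} I would use $i(U\wedge V)=i(V)\circ i(U)$ from Lemma \nmb!{2.2}\thetag{2} and expand $\L(U\wedge V)=[i(V)\circ i(U),d]$ by the graded-Leibniz rule for the graded commutator with $d$; the two resulting terms are precisely $i(V)\circ\L(U)$ and $(-1)^u\L(V)\circ i(U)$. Then \thetag{3} follows from \thetag{2} by induction on $u$: peel off $X_u$ using \thetag{2} on $(X_1\wedge\dots\wedge X_{u-1})\wedge X_u$ and use $i(X_1\wedge\dots\wedge X_{u-1})=i(X_{u-1})\circ\dots\circ i(X_1)$, again from Lemma \nmb!{2.2}\thetag{2}. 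Formula \thetag{4} is a one-line computation: by Lemma \nmb!{2.2}\thetag{1} the operator $i(f)$ equals $\mu(f)$, and $[\mu(f),d]\om=f\cdot d\om-d(f\cdot\om)=-df\wedge\om=-\mu(df)\om$.

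The crux is \thetag{5}, which I would prove by induction on $u=\deg U$, simultaneously for all $V$; at each stage it suffices to treat decomposable multivector fields and then extend by boundedness and density of decomposables, cf.\ \nmb!{1.3}. For the base case $u=1$, where $U=X$ is a vector field, $\L(X)$ has degree $0$, so the map $A\mapsto[\L(X),A]$ is a derivation of the associative algebra of operators on $\Om(M)$; writing $i(V)=i(Y_w)\circ\dots\circ i(Y_1)$ (Lemma \nmb!{2.2}\thetag{2}) and applying the classical identity $[\L(X),i(Y)]=i([X,Y])$ for vector fields (valid in the convenient setting, see \cite[Section 33]{KrieglMichor97}) term by term, then re-assembling the sum via Lemma \nmb!{2.2}\thetag{2} and the fact that $V\mapsto[X,V]$ is a graded derivation for $\wedge$ (Theorem \nmb!{1.4}), yields $[\L(X),i(V)]=i([X,V])$, which is \thetag{5} for $u=1$.

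For the inductive step write $U=X\wedge U'$ with $\deg U'=u-1$, expand $\L(U)=i(U')\circ\L(X)-\L(U')\circ i(X)$ by \thetag{2}, and compute $[\L(U),i(V)]$ by the graded-Leibniz rule, using $[i(U'),i(V)]=0$ (Lemma \nmb!{2.2}\thetag{2}), the base case, and the inductive hypothesis for $U'$; the two surviving terms are $i([X,V]\wedge U')$ and a sign times $i(X\wedge[U',V])$. It then remains to identify these with $(-1)^{(u-1)(v-1)}i([U,V])$, i.e.\ to establish the Schouten-bracket identity $[X\wedge U',V]=(-1)^{(u-1)(v-1)}[X,V]\wedge U'+X\wedge[U',V]$, which follows by applying the graded skew-symmetry and the graded-Leibniz rule of Theorem \nmb!{1.4} to $[V,X\wedge U']$. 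I expect the reconciliation of the sign exponents to be the main, purely bookkeeping, obstacle; the computation closes because $v+1+(u-2)(v-1)\equiv(u-1)(v-1)\pmod 2$.

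Given \thetag{5}, formula \thetag{6} is immediate from the graded Jacobi identity: $[\L(U),\L(V)]=[\L(U),[i(V),d]]=[[\L(U),i(V)],d]\pm[i(V),[\L(U),d]]$, where the second term vanishes since $[\L(U),d]=0$ (stated in \nmb!{2.3}), so $[\L(U),\L(V)]=[(-1)^{(u-1)(v-1)}i([U,V]),d]=(-1)^{(u-1)(v-1)}\L([U,V])$. For \thetag{7} I would first record the elementary fact that for $W\in\Ga(\bigwedge^w TM)$ and $\om\in\Om^{w-1}$ one has $i(W)\om=0$ for degree reasons, hence $\L(W)\om=i(W)\,d\om=\langle d\om,W\rangle$, identifying $i(W)\colon\Om^w\to\Om^0$ with the pairing of \nmb!{2.1}. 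Applying this with $W=-[V,U]$ (of degree $u+v-1$) and $\om\in\Om^{u+v-2}$, then \thetag{6}, gives $\langle d\om,-[V,U]\rangle=[\L(U),\L(V)]\om=\L(U)\L(V)\om-(-1)^{(u-1)(v-1)}\L(V)\L(U)\om$. Expanding $\L(U)\L(V)\om$ with $\L(U)=[i(U),d]$ and $\L(V)=[i(V),d]$, three of the four terms vanish --- one by $d\circ d=0$, two because the insertion operators then act on forms of too low degree --- leaving $\L(U)\L(V)\om=i(U)\,d\,i(V)\,d\om=\langle d\,i(V)\,d\om,U\rangle$, and symmetrically $\L(V)\L(U)\om=\langle d\,i(U)\,d\om,V\rangle$; this is exactly \thetag{7}.
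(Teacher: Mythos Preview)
Your proposal is correct and largely parallels the paper's argument: \thetag{2}, \thetag{3}, \thetag{4}, and \thetag{7} are proved exactly as in the paper, and your induction for \thetag{5} (split $U=X\wedge U'$, use \thetag{2}, the graded Leibniz rule for commutators, $[i(\cdot),i(\cdot)]=0$, and Theorem~\nmb!{1.4}) is essentially the paper's induction step; the only difference is that you induct on $u$ with base case $u=1$ handled via decomposable $V$ and the classical Cartan relation $[\L(X),i(Y)]=i([X,Y])$, whereas the paper inducts on $u+v$ after first recording the symmetry $[\L(U),i(V)]=(-1)^{v-1}[i(U),\L(V)]$ and checking the tiny base cases $u+v\le 1$ directly.

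The one genuine divergence is \thetag{6}. The paper proves \thetag{6} by a second, independent induction on $u+v$, again splitting off a vector field. Your route is cleaner: once \thetag{5} is in hand, $[\L(U),\L(V)]=[\L(U),[i(V),d]]=[[\L(U),i(V)],d]$ (the other Jacobi term vanishes since $[\L(U),d]=0$), and then \thetag{5} gives $(-1)^{(u-1)(v-1)}\L([U,V])$ immediately. This avoids a page of sign bookkeeping and makes transparent that \thetag{6} is a formal consequence of \thetag{5}; the paper's separate induction, on the other hand, rechecks compatibility with the algebraic structure of the Schouten bracket and is closer in spirit to the self-contained argument reused in Section~\nmb!{3}, where the bracket is \emph{defined} by \thetag{7} and one cannot appeal to Theorem~\nmb!{1.4}.
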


Formula \thetag{7} suffices to compute $[U,V]$ in a local chart, and it remains valid if we insert 
any closed form instead of $d\om$ since \thetag{7} is a local formula, and locally any closed form is exact by the Poincar\'e lemma \cite[33.20]{KrieglMichor97}. Formula \thetag{7}  was the starting point of the treatment of the Schouten-Nijenhuis bracket in  \cite{Tulczyjew74}.

\begin{proof}
\begin{align*}
\L(U\wedge V) &= [i(U\wedge V),d] = [i(V)i(U),d] = i(V)i(U)d -(-1)^{u+v}di(V)i(U) \tag{2}
\\& 
= i(V)\big(i(U)d - (-1)^udi(U)\big) + (-1)^u\big(i(V)d - (-1)^vdi(V)\big)i(U)
\\&
= i(V)\o \L(U) + (-1)^u\L(V)\o i(U)
\end{align*}
\thetag{3} Use induction on $u$, using \thetag{2}:
\begin{align*}
&\L(X_1\wedge \dots\wedge X_u) = i(X_u)\L(X_1\wedge \dots\wedge X_{u-1}) - (-1)^u \L(X_u)i(X_1\wedge \dots\wedge X_{u-1})
\\&
= \sum_{j=1}^{u-1} (-1)^{j-1} i(X_u)\dots \L(X_j)\dots i(X_1)  + (-1)^{u-1} \L(X_u)i(X_{u-1}) \dots i(X_1)
\\&
= \sum_{j=1}^u (-1)^{j-1} i(X_u)\dots \L(X_j)\dots i(X_1)  
\end{align*}
\begin{align*}
& d(f.\om) = df\wedge \om + f.d\om \quad\implies\quad d\o \mu(f) = \mu(df) + \mu(f) \o d  \quad\implies\tag{4}
\\&
\L(f) = [i(f),d] = [\mu(f),d] = - \mu(df)
\end{align*}
\thetag{5} 
We start with
\begin{align*}
[\L(U),i(V)] &= [[i(U),d],i(V)] = [i(U),[d,i(V)]] -(-1)^u [d,[i(U),i(V)]] \tag{a}   
\\&
= (-1)^{v-1}[i(U),\L(V)] + 0.
\end{align*}
We use induction on $u+v$. For $u+v=0$ we have 
$$[\L(f),i(g)]=[-\mu(df),\mu(g)]=0 = -i([f,g]).$$
For $u+v=1$, by \thetag{a} it suffices to check 
\begin{align*}
[\L(X),i(f)] &= \L(X)\mu(f) - \mu(f)\L(X)= i(df(X)).
\end{align*}
By \thetag{a} again, for the induction step it suffices to check
\begin{align*}
[\L(X\wedge U),i(V)] &= [i(U)\L(X) -\L(U)i(X),i(V)]\quad\text{ by \thetag{2}}
\\&
= i(U)\L(X)i(V) -(-1)^{uv} i(V)i(U)\L(X) 
\\&\qquad
- \L(U)i(X)i(V) + (-1)^{uv} i(V)\L(U)i(X)
\\&
= i(U)[\L(X),i(V)] -(-1)^{v} [\L(U),i(V)]i(X) 
\\&
=i(U)i(-[V,X]) -(-1)^v i(-[V,U])i(X)\quad\text{ by induction }
\\&
=-i\big([V,X]\wedge U + (-1)^{v-1} X\wedge [V,U] \big)\quad\text{ by \thetag{\nmb!{2.2}.2} }
\\&
= -i([V,X\wedge U])\quad\text{ by Theorem \nmb!{1.4}. }
\end{align*}
\thetag{6}
We use again induction on $u+v$.  For $u+v=0$ we have by \thetag{4}  
$$[\L(f),\L(g)] = [-\mu(df),-\mu(dg)] = \mu(df)\mu(dg)+\mu(dg)\mu(df) = 0 = \L(-[g,f]).$$
For $u+v=1$ we have 
\begin{align*}
[\L(X),\L(f)] &= -[\L(X),\mu(df)] = - \mu(\L(X)df) 
\\
\L(-[f,X]) &= \L(\bar\io(df)X) = \L(df(X)) = -\mu(d(df(X))) = -\mu(\L(X)df).
\end{align*}
The induction step:
\begin{align*}
[\L(&X\wedge U),\L(V)] = [i(U)\L(X)-\L(U)i(X),\L(V)] \quad\text{ by \thetag{1}}
\\&
= i(U)\L(X)\L(V) -(-1)^{u(v-1)} \L(V)i(U)\L(X) 
\\&\qquad
- \L(U)i(X)\L(V) + (-1)^{u(v-1)} \L(V)\L(U)i(X)
\\&
=  i(U)[\L(X),\L(V)] -(-1)^{u(v-1)} [\L(V),i(U)]\L(X) 
\\&\qquad
- \L(U)[i(X),\L(V)] + (-1)^{u(v-1)} [\L(V),\L(U])i(X)
\\&
=  i(U)\L(-[V,X]) +(-1)^{u(v-1)} i([U,V])\L(X) \quad\text{ by induction and \thetag{4}} 
\\&\qquad
+ (-1)^{(v-1)}\L(U)i([X,V]) + (-1)^{u(v-1)} \L([U,V])i(X)
\\
\L(&-[V,X\wedge U]) = \L(-[V,X]\wedge U -(-1)^{v-1} X\wedge [V,U]) \quad\text{ by Theorem \nmb!{1.4}}
\\&
=  i(U)\L(-[V,X]) - (-1)^{v}\L(U)i([X,V])  \quad\text{ by  \thetag{2}} 
\\&\qquad
-(-1)^{v} i([V,U])\L(X) + (-1)^{u(v-1)} \L([U,V])i(X)\quad \text{ which proves \thetag{6}.}
\end{align*}
\thetag{7} 
For $\om\in\Om^{u+v-3}(M)$ we have $i(-[V,U])\om=0$ by degree, thus
\begin{align*}
\L(-[V,U])\om &= i(-[V,U])d\om + 0 = \langle d\om, -[V,U]\rangle
\\
\L(U)\L(V)\om &= [i(U),d]i[(V),d]\om  =  i(U)di(V)d\om + 0 + 0 + 0 = \langle di(V)d\om,U,\rangle
\\
\L(V)\L(U)\om &= \langle di(U)d\om,V\rangle
\\
\L(-[V,U])\om &= [\L(U),\L(V)]\om = \langle di(V)d\om,U,\rangle -(-1)^{(u-1)(v-1)}  \langle di(U)d\om,V\rangle \qedhere
\end{align*}
\end{proof}

\subsection{Naturality of the Schouten Nijenhuis bracket}\nmb.{2.4}
Let $f:M\to N$ be a smooth mapping between convenient manifolds. We say that 
$U\in \Ga(\bigwedge^u TM)$ and $U'\in \Ga(\bigwedge^u TN)$ are $f$-related, if 
$\bigwedge^uTf.U = U'\o f$ holds:
$$
\xymatrix{
\bigwedge\nolimits^uTM \ar[r]^{\bigwedge\nolimits^u Tf} & \bigwedge\nolimits^uTN \\
M \ar[u]^{U} \ar[r]^{f} & N \ar[u]_{U'}
}
$$

\begin{proposition}\nmb.{2.5}
\begin{enumerate}
\item
Two multivector fields $U$ and $U'$ as above are $f$-related if and only if 
$i(U)\o f^* = f^*\o i(U'):\Om(N)\to \Om(M)$. 
\item
$U$ and $U'$ as above are $f$-related if and only if 
$\L(U)\o f^* = f^*\o \L(U'):\Om(N)\to \Om(M)$ holds.
\item
If $U_j$ and $U_j'$ are $f$-related for $j=1,2$ then also their Schouten brackets $[U_1,U_2]$ and 
$[U_1',U_2']$ are $f$-related. \hfil \qed
\end{enumerate}
\end{proposition}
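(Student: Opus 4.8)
The plan is to reduce everything to the operator criterion of Proposition~\nmb!{2.5}(1), which is the natural bridge between $f$-relatedness and the pullback $f^*$ on forms.

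\textbf{Part (1).} First I would prove the "only if" direction: assume $\bigwedge^u Tf\o U = U'\o f$. For $\om\in\Om^{u+k}(N)$ and vector fields $X_1,\dots,X_k$ on $M$ one computes $\bigl(i(U)f^*\om\bigr)(X_1,\dots,X_k)$ fiberwise: at a point $x$ it equals $f^*\om$ applied to $U(x)\wedge X_1\wedge\dots\wedge X_k$, and since $\bigwedge^{u+k}T_xf$ sends $U(x)\wedge X_1\wedge\dots\wedge X_k$ to $U'(f(x))\wedge T_xf\cdot X_1\wedge\dots\wedge T_xf\cdot X_k$ (using that $\bigwedge^{u}T_xf\cdot U(x)=U'(f(x))$ by hypothesis and that $\bigwedge^\bullet Tf$ is an algebra morphism for $\wedge$), this in turn equals $\bigl(f^*(i(U')\om)\bigr)(X_1,\dots,X_k)$. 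Because $C^\infty(M)$ separates covectors as in \nmb!{1.1} — equivalently a form is determined by its values on tuples of tangent vectors — the identity of operators follows. For the converse, one evaluates $i(U)f^*\om = f^*i(U')\om$ on $\om$ ranging over pullbackable $u$-forms: taking $\om = g_0\,dg_1\wedge\dots\wedge dg_u$ for $g_i\in C^\infty(N)$, the left side at $x$ reads off $\langle dg_1\wedge\dots\wedge dg_u, (\bigwedge^u T_xf)U(x)\rangle$ (times $g_0(f(x))$) while the right side reads off $\langle dg_1\wedge\dots\wedge dg_u, U'(f(x))\rangle$ (times $g_0(f(x))$); since such forms separate points of $\bigwedge^u T_{f(x)}N$ (again by the manifold hypothesis of \nmb!{1.1}, applied on $N$, together with density of decomposables noted in \nmb!{1.3}), we conclude $(\bigwedge^u T_xf)U(x)=U'(f(x))$.

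\textbf{Part (2).} Given (1), this is immediate from the definition $\L(U)=[i(U),d]$ in \thetag{\nmb!{2.3}.1} together with naturality of the exterior derivative, $d\o f^* = f^*\o d$ (valid in the convenient setting, \cite[33.21]{KrieglMichor97}). Indeed if $U,U'$ are $f$-related then
\[
\L(U)\o f^* = i(U)d f^* - (-1)^u d\, i(U)f^* = i(U)f^* d - (-1)^u d f^* i(U') = f^*\bigl(i(U')d - (-1)^u d\,i(U')\bigr) = f^*\o\L(U').
\]
Conversely, if $\L(U)f^* = f^*\L(U')$, then for $\om\in\Om^{u+k}(N)$ one has $i(U)f^*d\om - f^*i(U')d\om = (-1)^u d\bigl(i(U)f^*\om - f^*i(U')\om\bigr)$; applying this with $\om$ replaced by $g\cdot\om$ and subtracting $g$ times the identity for $\om$ (using the module and derivation identities of Lemma \nmb!{2.2}, and $d(g\om)=dg\wedge\om+g\,d\om$, $f^*(dg)=d(g\o f)$) kills the $d$-term and forces $i(U)f^*(\ldots)=f^*i(U')(\ldots)$ on enough forms to invoke (1). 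Alternatively, and more cleanly, one notes $i(U)f^*\om$ can be recovered from $\L(U)$ applied to the one-degree-higher form $dh\wedge\om$ up to lower-order terms, which again reduces to (1).

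\textbf{Part (3).} Assume $U_j$ is $f$-related to $U_j'$, $j=1,2$. By part (2), $\L(U_j)\o f^* = f^*\o\L(U_j')$, hence the graded commutators satisfy $[\L(U_1),\L(U_2)]\o f^* = f^*\o[\L(U_1'),\L(U_2')]$. By \thetag{\nmb!{2.3}.6} the left side is $(-1)^{(u_1-1)(u_2-1)}\L([U_1,U_2])\o f^*$ and the right side is $(-1)^{(u_1-1)(u_2-1)}f^*\o\L([U_1',U_2'])$, so $\L([U_1,U_2])\o f^* = f^*\o\L([U_1',U_2'])$, and applying part (2) in reverse gives that $[U_1,U_2]$ and $[U_1',U_2']$ are $f$-related.

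\textbf{Main obstacle.} The delicate point is entirely inside part (1) — and specifically in the two applications of the separation hypothesis from \nmb!{1.1}: on the $M$-side, that a differential form is determined by its pointwise values on tuples of kinematic tangent vectors (so that an operator identity on $\Om(M)$ can be checked fiberwise), and on the $N$-side, that decomposable dual forms $dg_1\wedge\dots\wedge dg_u$ with $g_i\in C^\infty(N)$ separate the points of $\bigwedge^u T_yN$ for each $y$. The latter uses both the assumption that every covector on $N$ is an exact differential and the density of decomposable vectors in $\bigwedge^u T_yN$ recorded in \nmb!{1.3}; in the sheaf-theoretic variant flagged in \nmb!{1.1} one works with local sections over a chart, where these statements are automatic. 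Everything else is formal manipulation with $i$, $d$, $\L$, $f^*$ and the identities already established in Lemma \nmb!{2.2} and \thetag{\nmb!{2.3}.6}.
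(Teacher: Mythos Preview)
The paper gives no proof of Proposition \nmb!{2.5} at all: the \verb|\qed| is placed directly inside the statement, signalling that the author regards the result as routine once the machinery of \nmb!{2.1}--\nmb!{2.3} is in place. Your argument is correct and is exactly the kind of verification the author is suppressing.

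Two small remarks. First, your treatment of the converse in part~(2) is a little telegraphic: the sentence ``kills the $d$-term'' is not quite what happens. Writing $A:=i(U)\o f^* - f^*\o i(U')$, your hypothesis gives $A(d\om)=(-1)^u\,dA(\om)$, and the substitution $\om\mapsto g\om$ together with the $f^*$-module linearity of $A$ yields the recursion $A(dg\wedge\om)=(-1)^u\,d(f^*g)\wedge A(\om)$. Starting from $A(g_0)=0$ (degree) one climbs inductively to $A(g_0\,dg_1\wedge\dots\wedge dg_k)=0$, and then the pointwise density of such forms (assumption \nmb!{1.1} on $N$) plus fiberwise boundedness of $A$ give $A=0$. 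This is what your ``Alternatively'' sketch is gesturing at; you might state the recursion explicitly.

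Second, in part~(3) you use the converse of (2) to close the loop. A slightly shorter route, avoiding that converse, is to combine the forward direction of (1) and (2) with \thetag{\nmb!{2.3}.5} rather than \thetag{\nmb!{2.3}.6}: from $i(U_2)\o f^*=f^*\o i(U_2')$ and $\L(U_1)\o f^*=f^*\o\L(U_1')$ one gets $[\L(U_1),i(U_2)]\o f^* = f^*\o[\L(U_1'),i(U_2')]$, hence $i([U_1,U_2])\o f^*=f^*\o i([U_1',U_2'])$, and then only the (easier) converse of (1) is needed. Either way is fine.
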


\begin{lemma}\nmb.{2.6} Let $X\in \X(M)$ be a smooth vector field admitting a local flow $\Fl^X_t$; 
see \cite[32.13--32.17]{KrieglMichor97}. Then we have 
\begin{equation*}
\L_XU = \p_t|_0 (\Fl^X_t)^* U = [X,U].\hfill\qed
\end{equation*}
\end{lemma}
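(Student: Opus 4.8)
The plan is to recognise that both operations $U\mapsto \p_t|_0(\Fl^X_t)^*U$ and $U\mapsto [X,U]$ are bounded graded derivations of degree $0$ of the convenient graded algebra $(\Ga(\bigwedge TM),\wedge)$, and that they agree on the generators of a dense subalgebra; boundedness then forces them to coincide on all of $\Ga(\bigwedge TM)$. Write $D U:=\p_t|_0(\Fl^X_t)^*U$, where $(\Fl^X_t)^*U=\bigwedge^u T(\Fl^X_{-t})\o U\o\Fl^X_t$ on the open set where the local flow is defined. Since $\Fl^X$ is smooth (\cite[32.13--32.17]{KrieglMichor97}), the functor $\bigwedge^u$ is smooth on $L(E,F)$ (see \nmb!{1.2}--\nmb!{1.3}), and composition and evaluation are smooth operations in the $C^\infty$-calculus, the map $(t,U)\mapsto (\Fl^X_t)^*U$ is smooth from $\mathbb R\x\Ga(\bigwedge TM)$ into $\Ga(\bigwedge TM)$; hence $D$ is a well-defined bounded linear operator, homogeneous of degree $0$. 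Each $(\Fl^X_t)^*$ is an algebra homomorphism for $\wedge$ (the functor $\bigwedge^{\bullet}$ turns a linear map into a homomorphism of exterior algebras), so differentiating $(\Fl^X_t)^*(U\wedge V)=(\Fl^X_t)^*U\wedge (\Fl^X_t)^*V$ at $t=0$ and using that $\wedge$ is bounded bilinear gives $D(U\wedge V)=DU\wedge V+U\wedge DV$; thus $D$ is a derivation. By the last displayed identity of Theorem \nmb!{1.4} (with $u=1$) the operator $[X,\,\cdot\,]$ is likewise a bounded degree-$0$ derivation of $(\Ga(\bigwedge TM),\wedge)$.

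Next I would check that the two derivations agree in degrees $0$ and $1$. For $f\in C^\infty(M)=\Ga(\bigwedge^0 TM)$ we have $Df=\p_t|_0(f\o\Fl^X_t)=df(X)=Xf$, while $[f,X]=-\bar\io(df)X=-Xf$, so by the graded skew-symmetry in Theorem \nmb!{1.4}, $[X,f]=-(-1)^{(1-1)(0-1)}[f,X]=Xf=Df$. For $Y\in\X(M)=\Ga(\bigwedge^1 TM)$ the classical flow formula for the Lie derivative of a vector field, valid in the convenient setting (\cite[32.13--32.17]{KrieglMichor97}), gives $DY=\p_t|_0(\Fl^X_t)^*Y=[X,Y]$ with $[X,Y]$ the Lie bracket of vector fields; and the Lie bracket of vector fields is exactly the Schouten bracket $[X,Y]$ by the defining formula of Theorem \nmb!{1.4} (take $k=l=1$). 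Hence $D$ and $[X,\,\cdot\,]$ agree on $\Ga(\bigwedge^0 TM)$ and on $\Ga(\bigwedge^1 TM)$.

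Finally, as used already in the proof of Theorem \nmb!{1.4}, the algebraic exterior powers $\bigwedge^k\X(M)$ have dense image in $\Ga(\bigwedge^k TM)$, so the subalgebra of $(\Ga(\bigwedge TM),\wedge)$ generated by degrees $0$ and $1$ is dense. A derivation is determined on any subalgebra by its values on generators, so $D-[X,\,\cdot\,]$ vanishes on this dense subalgebra, and being bounded it vanishes on its $c^\infty$-closure; that is, $\p_t|_0(\Fl^X_t)^*U=[X,U]$ for all $U\in\Ga(\bigwedge TM)$. The step I expect to be the main obstacle is the careful verification that $D$ is genuinely defined and bounded on all of $\Ga(\bigwedge TM)$ — that $t\mapsto (\Fl^X_t)^*U$ is a smooth (or even just differentiable at $0$) curve in the convenient space $\Ga(\bigwedge^u TM)$, depending boundedly on $U$ — since the flow is only local and the model space $E$ is infinite-dimensional; this is exactly where one leans on \cite[Sections 32--33]{KrieglMichor97}. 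Alternatively one can sidestep the boundedness of $D$ by a duality argument: for $\om\in\Om^u(M)$, \thetag{\nmb!{2.3}.5} with $U\to X$ gives $\langle\om,[X,U]\rangle=i([X,U])\om=\L(X)\bigl(i(U)\om\bigr)-i(U)\bigl(\L(X)\om\bigr)=X\langle\om,U\rangle-\langle\L(X)\om,U\rangle$, and differentiating the naturality identity $(\Fl^X_t)^*\langle\om,U\rangle=\langle(\Fl^X_t)^*\om,(\Fl^X_t)^*U\rangle$ at $t=0$, together with $\p_t|_0(\Fl^X_t)^*\om=\L(X)\om$, gives $\langle\om,DU\rangle=X\langle\om,U\rangle-\langle\L(X)\om,U\rangle$; since $u$-forms separate $u$-multivector fields under the standing assumption of \nmb!{1.1}, this again yields $DU=[X,U]$.
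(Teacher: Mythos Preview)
The paper does not actually supply a proof of Lemma \nmb!{2.6}: the statement ends with \qed\ and no argument follows. So there is nothing to compare against beyond noting that the lemma is meant to be an easy consequence of the machinery already in place in Section \nmb!{2}. Both of your approaches are correct and appropriate to that spirit.

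Your first route (two bounded degree-$0$ derivations agreeing on generators of a dense subalgebra) is the most elementary and matches the philosophy of the proof of Theorem \nmb!{1.4}; your own caveat about the smoothness/boundedness of $D$ is the only delicate point, and it is indeed handled by the references you cite. Your alternative duality argument is closer to what the placement of the lemma suggests the author had in mind: it uses precisely \thetag{\nmb!{2.3}.5} together with the standing separation assumption of \nmb!{1.1}, and it avoids the density step entirely. One small sharpening you could make there: instead of differentiating $(\Fl^X_t)^*\langle\om,U\rangle=\langle(\Fl^X_t)^*\om,(\Fl^X_t)^*U\rangle$ (which presupposes that $t\mapsto(\Fl^X_t)^*U$ is differentiable), rewrite $\langle\om,(\Fl^X_t)^*U\rangle=(\Fl^X_t)^*\langle(\Fl^X_{-t})^*\om,U\rangle$ and differentiate that; the right-hand side is manifestly smooth in $t$ since $t\mapsto(\Fl^X_{-t})^*\om$ is a smooth curve in $\Om^u(M)$ and the pairing with $U$ is bounded, so you obtain existence of $DU$ in the weak (hence, in the convenient setting, strong) sense together with the identity $DU=[X,U]$ in one stroke.
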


\section{The general Schouten bracket}\nmb0{3}

\subsection{General multivector fields and summable differential forms}\nmb.{3.1} For a convenient manifold the \emph{general multivector fields} of order $k$ are the smooth sections of the vector bundle $L^k_{\text{skew}}(T^*M, \mathbb R)\to M$. 
We denote these as follows: 
$$\on{MV}(M) = \sum_{k=0}^\infty \on{MV}^k(M) := \sum_{k=0}^\infty \Ga(L^k_{\text{skew}}(T^*M, \mathbb R))$$

A \emph{summable differential form} $\om$ on $M$ is a smooth section  of the bundle of skew 
symmetric tensors 
${\textstyle\bigwedge}^k_{\text{sum}}T^*M \subset \bar\otimes_\be^k T^*M = 
T^*M\bar\otimes_\be T^*M\bar\otimes_\be\dots\bar\otimes_\be T^*M\to M$, where $\bar\otimes_\be$ denotes the $c^\infty$-completed bornological tensor product which linearizes bounded bilinear mappings as used in \nmb!{1.3}.

Let us denote by $\Om^k_{\text{sum}}(M)$ the graded algebra of all summable differential 
forms. Note that exterior derivative $d:\Om^k(M)\to \Om^{k+1}(M)$ does not map 
$\Om^k_{\text{sum}}(M)$ into $\Om^{k+1}_{\text{sum}}(M)$; this fails even for $k=1$. Summability of a form is destroyed by the exterior derivative: For example, on a real Hilbert space  for  a bounded operator $A$ the 1-form $\om_x(X) = \langle Ax , X\rangle$ has exterior  derivative $d\om(X,Y)_x = \langle AX,Y\rangle - \langle AY,X\rangle- \langle Ax,[X,Y]\rangle$ is not summable if the operator $A-A^\top$ is not of trace class. 

Therefore we let 
$\Om^k_{\text{sum},d}(M)$ be the graded differential subalgebra of all summable forms 
$\om$ such that $d\om$ is again summable. Note that the latter condition is  a linear partial differential relation. 
By the condition in \nmb!{1.1} for the manifold $M$ we have:
\\
\emph{For each $\al\in T^*M$ there exists $f\in C^{\infty}(M)$ with $df_{\pi(\al)}=\al$.}
Consequently,
\begin{equation*}
\on{ev}_x\o\, d : \Om^k_{\text{sum,}d}(M) \to  \bigwedge^{k+1} T^*_xM \text{ is surjective for all }x\in M
\tag{1} \end{equation*}

The vector bundle $L^k_{\text{skew}}(T^*M, \mathbb R)\to M$ is the dual bundle of 
${\textstyle\bigwedge}^k_{\text{sum}}T^*M\to M$; we will denote the duality by (the dual space is always on the feft hand side)
\begin{gather*}
\langle\quad,\quad\rangle: L^k_{\text{skew}}(T^*M, \mathbb R)\x_M {\textstyle\bigwedge}^k_{\text{sum,}\be}T^*M \to \mathbb R
\\
\langle U,\ph_1\wedge \dots\wedge \ph_k\rangle = U(\ph_1,\dots,\ph_k)
\end{gather*} 
as well as its extension to spaces of sections.

For $\om\in \Om^k_{\text{sum}}(M)$ we consider the pointwise linear (i.e., vector bundle push-forward) mapping 
\begin{align*}
\mu(\om): \Om^\ell_{\text{sum}}(M)\to \Om^{\ell+k}_{\text{sum}}(M),\quad \mu(\om)\ph = \om\wedge \ph 
\end{align*}
and its pointwise dual 
\begin{align*}
\bar\io(\om)= \mu(\om)^*&: \on{MV}^{\ell+k}(M)\to \on{MV}^{\ell}(M),
\\
\langle U,\mu(\om)\ph\rangle &= \langle U,\om\wedge \ph\rangle = \langle \bar \io(\om)U,\ph\rangle
\end{align*}
For a decomposable k-form $\om=\ph_1\wedge \dots\wedge \ph_k$ we have 
\begin{align*}
\langle\bar \io(\ph_1\wedge \dots\wedge \ph_k)U,\ph_{k+1}\wedge \dots\wedge \ph_{k+\ell}\rangle &=  
\langle U,\ph_1\wedge \dots\wedge \ph_k\wedge \ph_{k+1}, \dots, \ph_{k+\ell})   \tag{2}
\\&
=U(\ph_1,\dots,\ph_{k+\ell})
\end{align*}
Similarly, for $U\in \on{MV}^u(M)$ we consider 
$$\bar\mu(U):\on{MV}^\ell(M)\to \on{MV}^{u+\ell}(M),\quad \bar\mu(U)V = U \wedge V$$
which is the dual of 
$i(U): \Om^{\ell+u}_{\text{sum}}(M)\to \Om^\ell_{\text{sum}}(M)$
which on decomposable $u+\ell$-forms is given by
\begin{align*}
i(U)&(\ph_1\wedge \dots\wedge \ph_{u+\ell}) =
\\&
= \frac1{u!\ell!}\sum_{\si\in\mathcal S_{k+\ell}}\on{sign}(\si) U(\ph_{\si(1)},\dots,\ph_{\si(u)}) \ph_{\si(u+1)}\wedge \dots\wedge \ph_{\si(k+\ell)}\,, \tag{3}
\end{align*}
since we have
\begin{align*}
\langle V&, i(U)(\ph_1\wedge \dots\wedge \ph_{u+v})\rangle = \langle U\wedge V,\ph_1\wedge \dots\wedge \ph_{u+v}\rangle
\\&
= (U\wedge V)(\ph_1,\dots,\ph_{u+v})
\\&
=\frac1{u!\,v!}\sum_{\si\in \mathcal S_{u+v}} \on{sign}(\si) U(\ph_{\si(1)},\dots, \ph_{\si(u)})
V(\ph_{\si(u+1)}, \dots, \ph_{\si(u+v)})
\\&
=\langle V,\frac1{u!\,v!}\sum_{\si\in \mathcal S_{u+v}} \on{sign}(\si) U(\ph_{\si(1)},\dots, \ph_{\si(u)})
\ph_{\si(u+1)}\wedge  \dots\wedge  \ph_{\si(u+v)}\big\rangle\,.
\end{align*}
\emph{\thetag{4} Formula \thetag{3} implies that $i(U)$ respects the $d$-stable subalgebra $\Om^k_{\text{sum},d}(M)$ so that 
\begin{equation*}
i(U): \Om^{\ell+u}_{\text{sum},d}(M)\to \Om^\ell_{\text{sum},d}(M)
\end{equation*}
}

We first have to redo Lemma \nmb!{2.2}
\begin{lemma}\nmb.{3.2}
Let $U$ be in $\on{MV}^u(M)$ and $V\in \on{MV}^v(M)$. Then we have:
\begin{enumerate}
\item 
$i(U):\Om_{\text{sum}}(M)\to \Om_{\text{sum}}(M)$ is a homogeneous bounded module 
homomorphism of degree $-u$.  It is a graded derivation of $\Om_{\text{sum}}(M)$ if and only if $u=1$. For $f\in C^{\infty}(M)$ we have $i(f)\om=f.\om$.
\item 
$i(U\wedge V)=i(V)\o i(U)$, thus the graded commutator vanishes: 
$$
[i(U),i(V)] = i(U)i(V) - (-1)^{uv}i(V)i(U)=0.
$$
\item For $\om\in\Om^1(M)$ and $\ps\in\Om_{\text{sum}}(M)$ we have 
\begin{gather*}
i(U)(\om\wedge \ps) = i(\bar\io(\om)U)\ps + (-1)^u\om\wedge i(U)\ps,
\quad
[i(U),\mu(\om)] = i(\bar\io(\om)U)
\end{gather*}
\end{enumerate}
\end{lemma}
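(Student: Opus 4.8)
The plan is to prove Lemma \nmb!{3.2} by following the structure of the proof of Lemma \nmb!{2.2}, but taking care that everything now happens on the spaces $\Om_{\text{sum}}(M)$ of summable forms and $\on{MV}(M)$ of general multivector fields, which are dual to each other fiberwise as set up in \nmb!{3.1}. For \thetag{1}, boundedness and the degree count are immediate from the pointwise formula \thetag{\nmb!{3.1}.3}: $i(U)$ is the fiberwise dual of the bounded pointwise map $\bar\mu(U)$, hence a bounded bundle map of degree $-u$ on sections; that it is a module homomorphism over $C^\infty(M)$ is clear since everything is pointwise; the case $f\in C^\infty(M)=\on{MV}^0(M)$ gives $i(f)\om=f.\om$ directly from \thetag{\nmb!{3.1}.3} (the empty sum over $\mathcal S_\ell$ of a scalar). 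The derivation claim is checked on decomposable forms in a local chart: a degree count shows $i(U)$ can only be a graded derivation when lowering degree by $1$, and for $u=1$ formula \thetag{3} reduces to the familiar insertion-of-one-covector operator, which is a graded derivation by the standard computation; conversely for $u\ge 2$ one exhibits a decomposable form on which the graded Leibniz rule fails.

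For \thetag{2}, I would dualize the obvious identity $\bar\mu(U\wedge V)W = U\wedge V\wedge W = \bar\mu(U)\bar\mu(V)W$ on $\on{MV}(M)$, exactly as in the proof of Lemma \nmb!{2.2}\thetag{2}: taking fiberwise adjoints reverses the order, giving $i(U\wedge V) = i(V)\o i(U)$; the vanishing of the graded commutator $[i(U),i(V)] = i(U)i(V)-(-1)^{uv}i(V)i(U)$ then follows because $i(U)i(V) = i(V\wedge U) = (-1)^{uv} i(U\wedge V) = (-1)^{uv} i(V)i(U)$, using graded commutativity of the wedge product on multivector fields together with $\bigwedge^{u+v}$ and reindexing. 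One needs only that the pairing $\langle\;,\;\rangle: \on{MV}^k(M)\x \Om^k_{\text{sum}}(M)\to C^\infty(M)$ from \nmb!{3.1} is nondegenerate enough to detect operator identities, which holds fiberwise by the duality $L^k_{\text{skew}}(T^*_xM,\mathbb R) = (\bigwedge^k_{\text{sum}} T^*_xM)'$.

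For \thetag{3}, the cleanest route is again by fiberwise duality rather than by repeating the decomposable computation. For $\om\in\Om^1(M)$, the identity $i(U)(\om\wedge\ps) = i(\bar\io(\om)U)\ps + (-1)^u\om\wedge i(U)\ps$ is equivalent to the operator identity $[i(U),\mu(\om)] = i(\bar\io(\om)U)$ on $\Om_{\text{sum}}(M)$; dualizing, this becomes a statement about $\bar\mu(U)$ and $\bar\io(\om)$ on $\on{MV}(M)$ that can be verified on elements of the form $\ph_1\wedge\dots\wedge\ph_k$ using the defining relations \thetag{\nmb!{3.1}.2} for $\bar\io$ and the fact that for a $1$-form $\om$, $\bar\io(\om)$ is a graded derivation of the wedge product on multivector fields (the contraction-by-a-covector operator). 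Alternatively, one mimics the telescoping computation in the proof of Lemma \nmb!{2.2}\thetag{3} verbatim, replacing vector fields $X_j$ by $1$-forms and $\om$ by a multivector field, since \thetag{3} only ever used $i(X)\mu(\om) = -\mu(\om)i(X) + \mu(i(X)\om)$ for vector fields $X$, whose exact analogue here is the $u=1$ case already in hand.

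The main obstacle is not any single computation — each of \thetag{1}, \thetag{2}, \thetag{3} is a routine transcription — but rather the bookkeeping needed to be sure that all the operators genuinely preserve the categories of \emph{summable} objects: that $\mu(\om)$ with $\om$ summable maps summable forms to summable forms, that $i(U)$ for a general (non-summable-dual) multivector field $U$ still lands in $\Om_{\text{sum}}(M)$, and that the fiberwise duality $\on{MV}^k \leftrightarrow \Om^k_{\text{sum}}$ is robust enough that "equality after pairing with all summable forms" implies "equality of multivector fields". These are exactly the points where the infinite-dimensional subtleties live; fortunately formula \thetag{\nmb!{3.1}.3} and the surjectivity statement \thetag{\nmb!{3.1}.1} have been arranged in \nmb!{3.1} precisely so that these closure properties hold, so the proof will consist of invoking them at the right moments and otherwise copying the finite-type arguments of Lemma \nmb!{2.2}.
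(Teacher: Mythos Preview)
Your proposal is correct and, for \thetag{1} and \thetag{2}, identical to the paper's proof: \thetag{1} is read off from the defining formula \thetag{\nmb!{3.1}.3}, and \thetag{2} is obtained by dualizing $\bar\mu(U\wedge V)=\bar\mu(U)\bar\mu(V)$. For \thetag{3} the paper takes your route~(a) in its unwound form: it evaluates $[i(U),\mu(\ph_1)]$ on a decomposable summable form $\ph_2\wedge\dots\wedge\ph_k$ using the shuffle formula \thetag{\nmb!{3.1}.3} and matches the result against $i(\bar\io(\ph_1)U)(\ph_2\wedge\dots\wedge\ph_k)$; this is exactly the combinatorial content of the statement that $\bar\io(\om)$ is a graded derivation of the wedge on $\on{MV}(M)$, which is your dualized version. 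One caution about your route~(b): the telescoping in the proof of \nmb!{2.2}\thetag{3} ran through a decomposition $U=X_1\wedge\dots\wedge X_u$, and that is precisely what is unavailable here, since a general $U\in\on{MV}^u(M)=\Ga(L^u_{\text{skew}}(T^*M,\mathbb R))$ need not lie in (the closure of) the decomposables --- that was the whole point of Section~\nmb!{3}. So route~(b) as literally stated does not go through, but your route~(a) already does the job and coincides with the paper's argument.
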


\begin{proof}
\thetag{1} follows from the definition. 
\thetag{2} is the dual of $\bar \mu (U\wedge V)W= U\wedge V\wedge W = \bar\mu(U)\bar\mu(V)W$.
\thetag{3} For 1-forms $\ph_j$ we have
\begin{align*}
&[i(U),\mu(\ph_1)](\ph_2\wedge \dots\wedge \ph_k) = i(U)(\ph_1\wedge \dots\wedge \ph_{k}) -
\\&
- (-1)^{u} \ph_1\wedge i(U)(\ph_2\wedge \dots\wedge \ph_k)
\\&
= \tfrac1{u!(k-u)!}\sum_{\si\in\mathcal S_{k}}\on{sign}(\si) U(\ph_{\si(1)},\dots,\ph_{\si(u)}) \ph_{\si(u+1)}\wedge \dots\wedge \ph_{\si(k)}
\\&
- (-1)^{u} \ph_1\wedge \tfrac1{u!(k-u-1)!}\!\!\!\!\sum_{\si\in\mathcal S_{k-1}}\on{sign}(\si) U(\ph_{\si(2)},\dots,\ph_{\si(u+1)}) \ph_{\si(u+2)}\wedge \dots\wedge \ph_{\si(k)}
\\
&i(\bar\io(\ph_1)U)(\ph_2\wedge \dots\wedge \ph_k) =  
\\&
= \tfrac1{(u-1)!(k-u)!}\sum_{\si\in\mathcal S_{k-1}}\on{sign}(\si) (\bar\io(\ph_1)U)(\ph_{\si(2)},\dots,\ph_{\si(u)}) \ph_{\si(u+1)}\wedge \dots\wedge \ph_{\si(k)}
\\&
= \tfrac1{(u-1)!(k-u)!}\!\!\sum_{\si\in\mathcal S_{k-1}} \on{sign}(\si)  U (\ph_{1},\ph_{\si(2)},\dots,\ph_{\si(u)}) \ph_{\si(u+1)}\wedge \dots\wedge \ph_{\si(k)}
\\&\implies 
[i(U),\mu(\om)] = i(\bar\io(\om)U)  \qedhere
\end{align*}
\end{proof}

\subsection{Naturality}\nmb.{3.3}
Let $f:M\to N$ be a smooth mapping between convenient manifolds. 
Then we have a well defined pullback operation 
\begin{gather*}
f^*:\Om^k_{\text{sum}}(N)\to \Om^k_{\text{sum}}(M) \,,\quad
(f^*\om)_x = \big(\bigwedge^k(T_xf)^*\big)\om_{f(x)} 
\end{gather*}
which intertwines  via the `insertions' (which might not be injective in general) $\Om^k_{\text{sum}}(M)\to \Om^k(M)$ 
with the usual pullback operations. 

We say that 
$U\in \on{MV}^u(M)$ and $U'\in \on{MV}^u(N)$ are $f$-related, if for each $x\in M$ we have   
$L^u_{\text{skew}}(T_x^*f,\mathbb R).U_x = U'_{f(x)} $:
$$
\xymatrix@C=20mm{
L^u_{\text{skew}} (T^*M,\mathbb R) \ar[r]^{L^u_{\text{skew}} (T^*f,\mathbb R)} & L^u_{\text{skew}} (T^*N,\mathbb R)\\
M \ar[u]^{U} \ar[r]^{f} & N \ar[u]_{U'}
}
$$
Like in Proposition \nmb!{2.5} we have:
\\
\emph{Two multivector fields $U$ and $U'$ as above are $f$-related if and only if }
\begin{equation*}
i(U)\o f^* = f^*\o i(U'):\Om_{\text{sum}}(N)\to \Om_{\text{sum}}(M).
\end{equation*}

\subsection{\nmb.{3.4}The Schouten-Nijenhuis bracket for general multivector fields: Tulczyjew's Approach \cite{Tulczyjew74}}
We turn Theorem \nmb!{2.3} around and use \thetag{\nmb!{2.3}.7} as definition. 
For $\om\in \Om^{u+v-2}_{\text{sum,}d}(M) $
we put
\begin{equation*}
\langle [U,V], d\om \rangle = -\langle V, di(U)d\om\rangle  +(-1)^{(u-1)(v-1)}   \langle U,di(V)d\om \rangle\,.  \tag{1}
\end{equation*}
This remains valid if we insert a closed form in $\Om^{u+v-1}_{\text{sum,}d}(M)$ instead of $d\om$, since \thetag{1} is a local formula and locally each closed form is exact by the Lemma of Poincar\'e \cite[33.20]{KrieglMichor97}; the proof given there restricts to the subalgebra $\Om^{\bullet}_{\text{sum}}(M)$.
The right hand side of \thetag{1} is bounded linear in the variable $d\om$, hence \thetag{1} is uniquely defined, and it
readily extend as follows: Let $fd\om= f'd\om'$, then by \thetag{\nmb!{3.2}.3} we get
\begin{align*}
&f\langle U,di(V)d\om \rangle -(-1)^{(u-1)(v-1)} f\langle V, di(U)d\om\rangle =
\\&
= \langle U,di(V)(fd\om) \rangle -(-1)^{(u-1)(v-1)}\big( \langle V, di(U)(fd\om)\rangle 
\\&\qquad
- (-1)^{(u-1)v}\langle U\wedge V,d(fd\om)\rangle
\\&
= \langle U,di(V)(f'd\om') \rangle -(-1)^{(u-1)(v-1)}\big( \langle V, di(U)(f'd\om')\rangle 
\\&\qquad
- (-1)^{(u-1)v}\langle U\wedge V,d(f'd\om')\rangle
\\&
=f'\langle U,di(V)d\om' \rangle -(-1)^{(u-1)(v-1)} f'\langle V, di(U)d\om'\rangle.
\end{align*}
Therefore we also have
\begin{equation*}
\langle [U,V], fd\om \rangle = -f\langle V, di(U)d\om\rangle  +(-1)^{(u-1)(v-1)}   f\langle U,di(V)d\om \rangle \,. \tag{2}
\end{equation*}
Even on a smoothly paracompact manifold $M$ it is not obvious that 
\thetag{2} implies that $[U,V]$ is multivector field of order $u+v-1$. 
But \thetag{1} suffices for that. By naturality \nmb!{3.3} we assume that we are in a chart, or that $M$ is $c^\infty$-open in a convenient vector space: Then for constant 1-forms $\ph_i$ we compute \thetag{1}:  
\begin{align*}
&\langle [U,V], \ph_1\wedge \dots \ph_{u+v-1} \rangle = -\langle V, di(U)( \ph_1\wedge \dots \ph_{u+v-1})\rangle  +
\\&\qquad\qquad\qquad
+(-1)^{(u-1)(v-1)}   \langle U,di(V)( \ph_1\wedge \dots \ph_{u+v-1})\rangle 
\\&
= -\Big\langle V, d\Big(\frac1{u!(v-1)!}\sum_{\si\in\mathcal S_{u+v-1}} U(\ph_{\si(1)},\dots,\ph_{\si(u)}) \ph_{\si(u+1)}\wedge \dots \ph_{\si(u+v-1)}\Big)\Big\rangle+ \cdots
\\&
= - \frac1{u!(v-1)!}\sum_{\si\in\mathcal S_{u+v-1}} V\big(d(U(\ph_{\si(1)},\dots,\ph_{\si(u)})) \wedge \ph_{\si(u+1)}\wedge \dots \ph_{\si(u+v-1)}\big) + \cdots
\end{align*}
which is visibly a multivector field, since the $\ph_i$ are constant 1-forms; one may view  the $U(\ph_{\si(1)},\dots,\ph_{\si(u)})$ as coefficient functions.
Therefore 
\begin{equation*}
[\;,\;]: \on{MV}^u(M) \x \on{MV}^v(M) \to \on{MV}^{u+v-1}(M) \tag{3}
\end{equation*}
is a smooth (bounded) bilinear operator satisfying $[U,V]= -(-1)^{(u-1)(v-1)}[V,U]$.
It also satisfies
\begin{equation*}
\bar\io(df)[U,V] = [\bar\io(df)U,V] +(-1)^{u-1}[U,\bar\io(df)V]  \tag{4}
\end{equation*}
since by \thetag{\nmb!{3.2}.3} we have for $\om\in\Om^{u+v-3}_{\text{sum},d}(M)$
\begin{align*}
&\langle \bar\io(df)[U,V], d\om\rangle = \langle [U,V],(df\wedge d\om)\rangle
\\&
= - \langle V, di(U)(df\wedge d\om)\rangle  +(-1)^{(u-1)(v-1)}   \langle U,di(V)(df\wedge d\om) \rangle
\\&
=  -\langle V, di(\bar\io(df)U) d\om\rangle  +(-1)^u \langle V,df\wedge d i(U)d\om\rangle
\\&\quad
+(-1)^{(u-1)(v-1)}  \langle U,di(\bar\io(df)V)d\om \rangle +(-1)^{u(v-1)} \langle U, df\wedge di(V)d\om\rangle
\\&
\big\langle [\bar\io(df)U,V] +(-1)^{u-1} [U,\bar\io(df)V],d\om\big\rangle 
\\&
= -\langle V,di(\bar\io(df)U)d\om\rangle +(-1)^{u(v-1)} \langle \bar\io(df)U,di(V)d\om\rangle  
\\&\quad
+(-1)^{u-1}\big(-\langle \bar\io(df)V,di(U)d\om\rangle     +(-1)^{(u-1)v} \langle U,di(\bar\io(df)V)d\om\rangle\big)  
\end{align*}

\subsection{\nmb.{3.5}The general Schouten-Nijenhuis bracket, using Lie differentials}
By \thetag{\nmb!{3.1}.3} 
for $U\in \on{MV}^u(M)$ the {\em Lie differential operator}  
\begin{equation*}
\tag{1}   \L(U) := [i(U),d] = i(U)\o d - (-1)^u d\o i(U):\Om^k_{\text{sum},d}(M)\to \Om^{k-u+1}_{\text{sum},d}(M)
\end{equation*}
is well defined. 
It is a 
derivation if and only if $U$ is a vector field. We have $[\L(U),d]=0$ by the graded Jacobi 
identity of the graded commutator. We now generalise Theorem \nmb!{2.3} to this new situation:

\begin{theorem*}
Let $U\in\on{MV}^u(M)$, $V\in \on{MV}^v (M)$, $f\in C^\infty(M)$ and $\om\in \Om^{k}_{\text{sum,}d}(M)$.
Then we have:
\begin{align*}
\L(U\wedge V) &= i(V)\o \L(U) + (-1)^u\L(V)\o i(U)
\tag{2}\\
\L(X_1\wedge \dots\wedge X_u) &= \sum_j 
(-1)^{j-1}i(X_u)\cdots i(X_{j+1})\L(X_j)i(X_{j-1})\cdots i(X_1)
\tag{3}\\
[\L(U),i(V)] &= (-1)^{(u-1)(v-1)} i([U,V]) = -i([V,U])
\tag{4}\\
[\L(U),\L(V)] &= (-1)^{(u-1)(v-1)} \L([U,V]) = -\L([V,U])
\tag{5}
\\
 i([U,V]) &= -i(V)di(U) +(-1)^{(u-1)(v-1)}i(U)di(V)
\\&\quad
+(-1)^vdi(U\wedge V) +(-1)^{{u}}i(U\wedge V)d 
\tag{6} 
\\\text{The graded }&\text{Jacobi identity}\tag{7}
\\
[U,[V,W]] &= [[U,V],W] +(-1)^{u(v-1)}[V,[U,W]] 
\\
[U,V\wedge W] & = [U,V]\wedge W +(-1)^{(u-1)(v-1)} v\wedge [U,W] \tag{8}
\end{align*}
\end{theorem*}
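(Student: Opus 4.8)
The plan is to reduce everything to the defining formula \thetag{\nmb!{3.4}.1} together with Lemma \nmb!{3.2}, using the same graded-commutator algebra as in the proof of Theorem \nmb!{2.3}; the one genuinely new feature is that a general multivector field in $\on{MV}^u(M)$ is no longer locally a sum of decomposables $X_1\wedge\dots\wedge X_u$, so the inductions of Theorem \nmb!{2.3} on the number of vector-field factors must be replaced. To start, \thetag{2} and \thetag{3} are purely formal and go exactly as \thetag{\nmb!{2.3}.2} and \thetag{\nmb!{2.3}.3}: by \thetag{\nmb!{3.2}.2} we have $i(U\wedge V)=i(V)\o i(U)$, and since $d$ and all $i(W)$ preserve the differential subalgebra $\Om^{\bullet}_{\text{sum},d}(M)$ by \thetag{\nmb!{3.1}.4}, expanding $\L(U\wedge V)=[i(V)i(U),d]$ yields \thetag{2}, and \thetag{3} follows from \thetag{2} by induction on $u$.

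The core is \thetag{6}, and it is equivalent to \thetag{4}: by \thetag{\nmb!{3.2}.2} both $[\L(U),i(V)]$ and $i([V,U])$ rewrite as one and the same expression in $i(U)$, $i(V)$, $d$ (a sign check to be made once). I would prove \thetag{6}. Both sides are bounded operators $\Om^k_{\text{sum},d}(M)\to\Om^{k-u-v+1}_{\text{sum},d}(M)$, well defined by \thetag{\nmb!{3.1}.4}, and the assertion is local, so by naturality \nmb!{3.3} (or simply by restriction to open subsets --- $d$, $i(W)$, and also $[U,V]$ since \thetag{\nmb!{3.4}.1} is local, all commute with restriction) one may assume $M$ is $c^\infty$-open in the modelling space. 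I would then induct on $u+v$. The base cases $u+v\le1$ are immediate, both sides of \thetag{6} vanish on forms of degree $<u+v-1$ by a degree count, and, applying \thetag{\nmb!{3.2}.3} and \thetag{\nmb!{3.4}.4} to the left-hand side, the analogous $\bar\io(df)$-derivation identity to the right-hand side (a pure consequence of Lemma \nmb!{3.2} and $d\o\mu(df)=-\mu(df)\o d$), and the induction hypothesis for the order-$(u+v-1)$ pairs $(\bar\io(df)U,V)$ and $(U,\bar\io(df)V)$, one finds that $i([U,V])$ and the right-hand side of \thetag{6} have the \emph{same} graded commutator with every $\mu(df)$, $f\in C^{\infty}(M)$. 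Their difference $D$ is therefore a bounded operator that vanishes below form-degree $u+v-1$ and commutes with all $\mu(df)$; writing a summable form in a chart as a Mackey-limit of sums $\mu(d\ell)\ph$ with $\ell$ linear and $\ph$ of one lower degree (at the sheaf level of \nmb!{1.1} this presents no problem) and inducting on the form-degree propagates $D=0$ upward, so $D=0$. A useful checkpoint: on an exact form $d\om$ of degree $u+v-1$ formula \thetag{6} reads precisely \thetag{\nmb!{3.4}.1}, the two extra summands $(-1)^vdi(U\wedge V)+(-1)^ui(U\wedge V)d$ being exactly what compensates the $dg_0$-term on evaluation at $g_0\,dg_1\wedge\dots\wedge dg_k$, since $i([U,V])$ is $C^{\infty}(M)$-linear but $d$ is not --- the operator form of the passage from \thetag{\nmb!{3.4}.1} to \thetag{\nmb!{3.4}.2}.

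Granting \thetag{6}, the rest is bookkeeping. Formula \thetag{4} is the reformulation noted above. Formula \thetag{5} follows from \thetag{4} by the graded Jacobi identity for graded commutators together with $[\L(U),d]=0$: from $\L(U)=[i(U),d]$ and $[d,\L(V)]=0$ one obtains $[\L(U),\L(V)]=\pm[[\L(V),i(U)],d]=\pm[i([V,U]),d]=\pm\L([V,U])$, the signs combining to give \thetag{5}. For the graded Jacobi identity \thetag{7} and the Leibniz rule \thetag{8} I would use that $U\mapsto i(U)$, hence also $U\mapsto\L(U)$, is \emph{injective}: by the standing hypothesis of \nmb!{1.1} the differentials $df_x$ ($f\in C^{\infty}(M)$) exhaust $T_x^*M$, so $i(W)=0$ for $W\in\on{MV}^u(M)$ forces $W_x(df_{1,x},\dots,df_{u,x})=i(W)(df_1\wedge\dots\wedge df_u)_x=0$ for all $f_j$, hence $W=0$. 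Applying $\L$ to \thetag{7} and using \thetag{5} turns it into the graded Jacobi identity for the operators $\L(U),\L(V),\L(W)$; applying $i$ to \thetag{8} and using \thetag{4} and \thetag{\nmb!{3.2}.2} turns it into an identity among $i(U),i(V),i(W),d$; both hold, and injectivity then returns the statements for the bracket itself. (Alternatively, \thetag{8} may be proved directly, imitating the computation for \thetag{\nmb!{3.4}.4}.)

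The step I expect to be the real obstacle is \thetag{6}: upgrading \thetag{\nmb!{3.4}.1}, which only constrains $\langle[U,V],d\om\rangle$ for $\om$ of a single degree with $d\om$ exact, to the full operator identity on $\Om^{\bullet}_{\text{sum},d}(M)$ in all degrees. This is where the correction terms are forced, where the density and locality provisos of \nmb!{1.1} come in, and where the absence of a ``split off a vector field'' move forces the $\mu(df)$-commutation induction in place of the direct inductions of Section \nmb!{2}.
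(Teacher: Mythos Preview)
Your proposal is correct and follows essentially the same strategy as the paper: both replace the ``split off a vector field'' induction of Section~\nmb!{2} by showing that the difference of the two sides has vanishing graded commutator with every $\mu(df)$, invoking \thetag{\nmb!{3.2}.3} and \thetag{\nmb!{3.4}.4}, and then running an induction on the form degree (using that decomposables $d\ell_1\wedge\dots\wedge d\ell_k$ are dense in a chart). The only organizational difference is that you take \thetag{6} as the primary target and read off \thetag{4} afterwards, whereas the paper proves \thetag{4} first and then expands $-[\L(V),i(U)]=-[[i(V),d],i(U)]$ to obtain \thetag{6}; these are transparently equivalent. Your derivation of \thetag{5} from \thetag{4} via $[\L(U),d]=0$ and the graded Jacobi identity for commutators is in fact slightly more economical than the paper's route, which runs a parallel $\mu(df)$--commutator induction for \thetag{5} separately; both the paper and you then deduce \thetag{7} from \thetag{5} using injectivity of $U\mapsto\L(U)$ guaranteed by the hypothesis in \nmb!{1.1}.
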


Property \thetag{6} extends the definition \thetag{\nmb!{3.4}.1} to the more general situation. It corresponds to \cite[3.3]{Tulczyjew74}.

\begin{proof} 
 \thetag{2} has the same proof as \thetag{\nmb!{2.3}.2}. From it we see that also \thetag{3} is true, so the Schouten-Nijenhuis bracket restricts to one treated in Section \nmb!{2} for summable multivector fields.
\\
\thetag{4} 
We prove that  $[\L(U),i(V)]\om = -i([V,U])\om$ is valid for
$\om\in\Om^k_{\text{sum},d}(M)$ by induction on $k$. For this it suffices to show that 
\begin{align*}
[[\L(U),i(V)] +i([V,U]), \mu(df)] = 0
\end{align*}
By the graded Jacobi identiy for the graded commutator and by using  \thetag{\nmb!{3.2}.3} and 
$[d,\mu(df)]=0$ we have first
\begin{align*}
[\L(U),\mu(df)] &= [[i(U),d],\mu(df)] = [i(U),[d,\mu(df)]] - (-1)^u[d,[i(U),\mu(df)]] \tag{a}
\\&
= 0 -(-1)^u [d, i(\bar\io(df)U)] = -\L(\bar\io(df)U)
\end{align*}
and then
\begin{align*}
[[&\L(U),i(V)],\mu(df)] = [\L(U),[i(V),\mu(df)]] - (-1)^{(u-1)v} [i(V),[\L(U),\mu(df)]] 
\\&
= [\L(U),i(\bar\io(df)V)] +(-1)^{v-1} [\L(\bar\io(df)U),i(V)] 
\\
[i&([V,U]),\mu(df)] = i\big(\bar\io(df)[V,U]\big) = i\big([\bar\io(df)V,U] + (-1)^{v-1}[V,\bar\io(df)U]\big)\text{ by \thetag{\nmb!{3.4}.4}}
\\
[[&\L(U),i(V)] +i([V,U]), \mu(df)] = [\L(U),i(\bar\io(df)V)] +  i\big([\bar\io(df)V,U]\big)
\\&
+ (-1)^{v-1} \big([\L(\bar\io(df)U),i(V)] + i([V,\bar\io(df)U])\big) = 0
\end{align*}
by induction on $u+v$.
\\
\thetag{5} We prove  that $\big([\L(U),\L(V)] +\L([V,U])\big)\om =0$ for all $\om\in\Om^k_{\text{sum},d}(M)$ by induction on $u+v+k$.
If suffices to  show $[[\L(U),\L(V)] +\L([V,U],\mu(df)]$ = 0 for all $f$.
\begin{align*}
[[\L&(U),\L(V)],\mu(df)] = [\L(U),[\L(V),\mu(df)]] - (-1)^{(u-1)(v-1)}[\L(V),[\L(U),\mu(df)]]
\\& 
=[\L(U),-\L(\bar\io(df)V)] - (-1)^{(u-1)(v-1)}[\L(V),-\L(\bar\io(df)U)] \quad\text{ by \thetag{a}}
\\& 
=\L([\bar\io(df)V,U]) - (-1)^{(u-1)(v-1)}[\L([\bar\io(df)U,V])\quad\text{ by induction on }u+v
\\& 
=\L\big([\bar\io(df)V,U] - (-1)^{v}[V,\bar\io(df)U]\big) = \L\big(\bar\io(df)[V,U]\big) \quad\text{ by \thetag{\nmb!{3.4}.4}}
\\&
=[-\L([V,U]),\mu(df)]
\end{align*}
\thetag{6} follows from \thetag{4}, the graded Jacobi identity for the graded commutator, and \thetag{\nmb!{3.2}.2} as follows:
\begin{align*}
i([U,V]) &= - [\L(V),i(U)] = -[[i(V),d],i(U)] 
\\&
=-i(V)di(U) +(-1)^{(u-1)(v-1)}i(U)di(V)
\\&\quad+(-1)^vdi(V)i(U) +(-1)^{{(v-1)u}}i(U)i(V)d 
\end{align*}
\thetag{7} Since $U\mapsto \L(U)$ is injective by the assumptions \nmb!{1.1} on $M$, this follows from \thetag{5} and the graded Jacobi identity for the graded commutator:
\begin{align*}
\L([U,[V,W]]) &= -[\L([V,W]),\L(U)] = [[\L(W),\L(V)],\L(U)] 
\\&
= [\L(W),[\L(V),\L(U)]] -(-1)^{(w-1)(v-1)}[\L(V),[\L(W),\L(U)]]
\\&
= -[\L(W),\L([U,V])] +(-1)^{(w-1)(v-1)}[\L(V),\L([U,W])]
\\&
= +\L([[U,V],W]) -(-1)^{(w-1)(v-1)}\L([[U,W],V])
\\&
= +\L([[U,V],W]) +(-1)^{u(v-1)}\L([V,[U,W]])
\end{align*}
\end{proof}

\begin{theorem}\nmb.{3.6}
Let $P\in \on{MV}^2(M)$ or in $\Ga(\bigwedge^2 TM)$. Then the skew symmetric product 
$\{f,g\}:= \langle P, df\wedge dg \rangle \in C^\infty(M,\mathbb R)$ (where $f,g\in C^\infty(M,\mathbb R)$) 
satisfies the Jacobi identity if and only if $[P,P]=0$
\end{theorem}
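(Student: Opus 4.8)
The plan is to evaluate $[P,P]$ against the closed decomposable $3$-forms $df\wedge dg\wedge dh$ and to recognise the result as a nonzero multiple of the Jacobiator of $\{\,,\,\}$. Since $[P,P]\in\on{MV}^3(M)$ is, pointwise, a bounded skew-symmetric trilinear form on $T^*_xM$, and since by the standing hypothesis \nmb!{1.1} every covector at $x$ has the form $df_x$, the section $[P,P]$ vanishes if and only if $\langle[P,P],df\wedge dg\wedge dh\rangle=0$ for all $f,g,h\in C^\infty(M)$; this reduces the theorem to a pointwise computation. When $P\in\Ga(\bigwedge^2 TM)$ one argues identically with \thetag{\nmb!{2.3}.7} in place of \thetag{\nmb!{3.4}.1}, or simply observes that the general bracket of \nmb!{3.5} restricts to the summable one by \thetag{\nmb!{3.5}.3}.

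First I would set $u=v=2$ in Tulczyjew's formula \thetag{\nmb!{3.4}.1}, which remains valid with $d\om$ replaced by any closed form in $\Om^3_{\text{sum},d}(M)$; the form $\Psi:=df\wedge dg\wedge dh$ is decomposable (hence summable) and closed, so after collecting the two equal terms this yields $\langle[P,P],\Psi\rangle=-2\langle P,\,d\,i(P)\Psi\rangle$. Next I would expand $i(P)\Psi$ from \thetag{\nmb!{3.1}.3}: the sum over $\mathcal S_3$ collapses, using the skew-symmetry of $P$, to
\[ i(P)(df\wedge dg\wedge dh)=\{f,g\}\,dh+\{g,h\}\,df+\{h,f\}\,dg, \]
where $P(df,dg)=\langle P,df\wedge dg\rangle=\{f,g\}$. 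Applying $d$, pairing with $P$, and using $\langle P,d\{f,g\}\wedge dh\rangle=\{\{f,g\},h\}$, one obtains
\[ \langle[P,P],df\wedge dg\wedge dh\rangle=-2\big(\{\{f,g\},h\}+\{\{g,h\},f\}+\{\{h,f\},g\}\big), \]
and the cyclic sum on the right is exactly (minus) the Jacobiator of $\{\,,\,\}$. Hence $[P,P]=0$ forces the Jacobi identity, and conversely the Jacobi identity forces $\langle[P,P],df\wedge dg\wedge dh\rangle=0$ for all $f,g,h$, so $[P,P]_x$ vanishes on all triples of covectors at every $x$ and therefore $[P,P]=0$.

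I do not expect a serious obstacle. The two points that need attention are: (i) the legitimacy of substituting the non-exact closed form $df\wedge dg\wedge dh$ into \thetag{\nmb!{3.4}.1}, which is handled by the local Poincar\'e-lemma argument already invoked in \nmb!{3.4} (locally $df\wedge dg\wedge dh=d\om$ for a summable $\om$ with summable $d\om$, and \thetag{\nmb!{3.4}.1} is a local formula); and (ii) the passage from ``all pairings vanish'' to ``$[P,P]=0$'', which is precisely where hypothesis \nmb!{1.1} enters (or, in the sheaf-theoretic variant mentioned there, the corresponding local statement). The collapse of the $\mathcal S_3$-sum in the middle step is a routine calculation I would not write out in full.
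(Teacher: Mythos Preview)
Your argument is correct and complete; the points (i) and (ii) you flag are exactly the delicate spots, and you handle them properly by invoking the local Poincar\'e lemma (as the paper does in \nmb!{3.4}) and the standing hypothesis \nmb!{1.1}.

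The route, however, differs from the paper's. You compute directly from Tulczyjew's defining formula \thetag{\nmb!{3.4}.1}: set $u=v=2$, feed in the closed form $df\wedge dg\wedge dh$, evaluate $i(P)$ on it via \thetag{\nmb!{3.1}.3}, and read off the Jacobiator. The paper instead stays inside the graded Lie algebra: it first rewrites the Poisson bracket as an iterated Schouten bracket, $\{f,g\}=[g,[f,P]]$, and then expands $[h,[g,[f,[P,P]]]]$ using only graded skew-symmetry and the graded Jacobi identity already established in \thetag{\nmb!{3.5}.7}, finally identifying this iterated bracket with $\langle [P,P],df\wedge dg\wedge dh\rangle$ via $[f,U]=-\bar\io(df)U$. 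Your approach is more elementary in that it does not need the graded Jacobi identity for the Schouten bracket (only the defining formula and a short insertion computation); the paper's approach is cleaner in that it never unpacks $i(P)$ explicitly and shows the result as a formal consequence of the graded Lie structure. Both arrive at the same identity (up to an overall sign in the Jacobiator, which is irrelevant for the equivalence).
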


This well known result is the infinite dimensional version of \cite[1.4]{Michor87a}.

\begin{proof}
For $f,g\in C^\infty(M,\mathbb R)$ we have 
$$
\{f,g\}:= \langle P,df\wedge dg \rangle = \langle \bar\io(df)P, dg\rangle = 
\langle  -\bar\io(dg)[f,P],1\rangle = [g,[f,P]]\,.
$$
Now a straightforward computation using the graded Jacobi identity and skew symmetry of the 
Schouten-Nijenhuis bracket gives:
$$
[h,[g,[f,[P,P]]]] = -2\Big(\{f,\{g,h\}\}+\{g,\{h,f\}\}+\{h,\{f,g\}\}\Big)\,.
$$
Since $[h,[g,[f,[P,P]]]] = \langle  df\wedge dg\wedge dh,[P,P]\rangle$, the result follows. 
\end{proof}

\def\cprime{$'$}

\end{document}